\author{Cristina Acciarri}
\address{via Francesco Crispi, n.81 San Benedetto del Tronto (AP) Italy}
\email{acciarricristina@yahoo.it}
\author{Aline de Souza Lima}
\address{Department of Mathematics and Statistics, Federal University of Goi\'as,
Goi\^ania-GO, 74001-970 Brazil}
\email{alinelima@mat.unb.br}
\author{Pavel Shumyatsky }
\address{ Department of Mathematics, University of Brasilia,
Brasilia-DF, 70910-900 Brazil}
\email{pavel@unb.br}
\thanks{The first author was supported by the Spanish Government, grant
MTM2008-06680-C02-02, partly with FEDER funds. She also thanks the Department of Mathematics of the University of Brasilia where this research was conducted. The second and the third authors were supported by CNPq-Brazil}
\keywords{automorphisms, centralizers, associated Lie rings, profinite groups}
\keywords{automorphisms, centralizers, associated Lie rings, profinite groups}
\subjclass[2000]{20E18,20D45,20F40}
\title{DERIVED SUBGROUPS OF FIXED POINTS IN PROFINITE GROUPS}
\newtheorem{theorem}{\sc Theorem}[section]
\newtheorem{lemma}[theorem]{\sc Lemma}
\newtheorem{proposition}[theorem]{\sc Proposition}
\newtheorem{corollary}[theorem]{\sc Corollary}
\newcommand{\F}{\mathbb{F}}
\begin{document}

\begin{abstract} The main result of this paper is the following theorem. Let $q$ be a prime, $A$ an elementary abelian group of order $q^3$. Suppose that $A$ acts as a coprime group of automorphisms on a profinite group $G$ in such a manner that $C_G(a)'$ is periodic for each $a \in  A^{\#}$. Then $G'$ is locally finite.
\end{abstract}

\maketitle


\section{Introduction}

A profinite group is a topological group that is isomorphic to an inverse limit of finite groups \cite{LM}, \cite{S2}. In the context of profinite groups all the usual concepts of group theory are interpreted topologically. In particular, by an automorphism of a profinite group we mean a continuous automorphism. A group of automorphisms $A$ of a profinite group $G$ will be called coprime if $A$ has finite order and $G$ is an inverse limit of finite groups whose orders are relatively prime to the order of $A$. Given an automorphism $a$ of a profinite group $G$, we denote by $C_G(a)$ the centralizer of $a$ in $G$, that is, the subgroup of $G$ formed by the elements fixed under $a$. This subgroup is always closed. 

The following result was proved in \cite{SA}. 

\begin{theorem}
\label{profinitoq2}
Let $q$ be a prime and $A$ an elementary abelian group of order $q^{2}$. Suppose that $A$ acts as a coprime group of automorphisms on a profinite group $G$. Assume that $C_{G}(a)$ is periodic for each $a\in A^{\#}$. Then $G$ is locally finite.
\end{theorem}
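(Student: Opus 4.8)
The plan is to reduce the assertion to the pro-$p$ case, where Zelmanov's solution of the restricted Burnside problem becomes available, and then reassemble. I first record the standard facts about the coprime action of $A=C_q\times C_q$ that I will use throughout. The group $A$ has exactly $q+1$ subgroups $A_1,\dots,A_{q+1}$ of order $q$, each generated by some $a_i\in A^{\#}$, and every nontrivial element of $A$ lies in exactly one $A_i$; hence $C_G(a_i)=C_G(A_i)$, so the hypothesis says that each of the $q+1$ closed subgroups $C_G(A_i)$ is periodic. I will freely use the coprime generation lemma $G=\langle C_G(a):a\in A^{\#}\rangle=\langle C_G(A_1),\dots,C_G(A_{q+1})\rangle$, its corollary that a rank-$2$ coprime action with $C_G(a)=1$ for all $a\in A^{\#}$ forces $G=1$, the fact that fixed points pass to $A$-invariant quotients ($C_{G/N}(a)=C_G(a)N/N$), and the existence of $A$-invariant Sylow pro-$p$ subgroups. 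Finally, by the Wilson--Zelmanov theorem a profinite group is locally finite as soon as it is periodic, so it suffices to produce local finiteness through periodicity on the relevant sections.

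The first substantial step is the pronilpotent case, where periodicity is exploited cleanly. Writing a pronilpotent $G$ as the Cartesian product $\prod_p P_p$ of its ($A$-invariant) Sylow pro-$p$ subgroups, I have $C_G(a)=\prod_p C_{P_p}(a)$. An infinite Cartesian product of nontrivial finite groups is never periodic, so the periodicity of $C_G(a)$ forces $C_{P_p}(a)=1$ for all but finitely many $p$; taking the union of these finite exceptional sets over the finitely many $a\in A^{\#}$, I find that for all but finitely many primes $p$ one has $C_{P_p}(a)=1$ for every $a\in A^{\#}$. The triviality corollary of the generation lemma then gives $P_p=1$ for almost all $p$. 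Thus only finitely many Sylow subgroups survive, and since a pronilpotent group that is a finite product of locally finite pro-$p$ groups is itself locally finite, the pronilpotent case is reduced to a single pro-$p$ group $P$ carrying a coprime $A$-action with each $C_P(a)$ periodic.

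The heart of the argument is to prove that such a $P$ is locally finite, and I would attack this through the associated Lie ring. Passing to the restricted Lie algebra $L=L(P)$ over $\mathbb{F}_p$ afforded by the Zassenhaus--Jennings filtration, the coprime action of $A$ lifts to $L$, the homogeneous components split into eigenspaces under $A$, and the fixed-point subalgebras $C_L(A_i)$ are governed by the $C_P(A_i)$. The periodicity of $C_P(A_i)$ --- hence, by Zelmanov's pro-$p$ theorem, its local finiteness --- translates, via the correspondence between finite-order elements and ad-nilpotent elements, into ad-nilpotency of the elements of $C_L(A_i)$; combining this with the weight-space decomposition under the rank-$2$ group $A$, I would show that $L$ is generated by ad-nilpotent elements and satisfies a polynomial identity, so that Zelmanov's theorem on Lie algebras yields that $L$ is nilpotent. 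Feeding nilpotency of $L$ back into $P$, and invoking Zelmanov's characterisation of local finiteness for pro-$p$ groups, then gives that $P$ is locally finite. The main obstacle lies exactly here: the hypothesis delivers only periodicity, not a uniform exponent, so the orders of elements of $C_P(a)$, and hence the a priori ad-nilpotency degrees in $L$, are unbounded. Extracting the uniform ad-nilpotency and the polynomial identity needed to apply Zelmanov, rather than a mere Engel condition with unbounded degrees, is the technical core and will require working inside finitely generated $A$-invariant sections with careful bookkeeping of the eigenspace products.

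It remains to reduce the general profinite case to the pronilpotent one. Here I would use that each $C_G(a)$, being periodic, is locally finite, together with the structure theory of profinite groups: the classification-based bounds on the non-nilpotent part of a finite group admitting a coprime rank-$2$ action with restricted fixed-point subgroups control the finite simple composition factors of the quotients $G/N$, so that modulo a pronilpotent Fitting-type subgroup $G$ behaves like a group already covered by the previous steps. Assembling the pronilpotent conclusion with this control over the composition structure, and passing to the inverse limit, yields that $G$ is periodic and therefore, by Wilson--Zelmanov, locally finite.
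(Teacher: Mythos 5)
There are two genuine gaps here, and you have flagged one of them yourself. Note first that the paper does not reprove Theorem \ref{profinitoq2} at all --- it is quoted from \cite{SA} --- but the machinery its proof rests on is exactly what Section 3 assembles, and the proof of Theorem \ref{PR} is the template. Your proposal stalls precisely where that machinery does the work: you reach the configuration ``pro-$p$ group $P$ with coprime $A$-action and each $C_P(a)$ periodic'' and then concede the core. The two missing ideas are these. First, the polynomial identity does not come from local finiteness of the centralizers; it comes from a Baire-category argument (the mechanism of Lemma \ref{periodicandL(G)PI}): a periodic $C_P(a)$ is the countable union of the closed sets $\{x : x^{p^i}=1\}$, so one of them contains a coset of an open subgroup, giving a \emph{coset identity}; Proposition \ref{WilZel} then makes $C_{L(P)}(a)$ PI, and Theorem \ref{PIandC} propagates PI from the fixed-point subalgebra to the whole algebra. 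You never name this mechanism, and no other route from mere periodicity to PI is on offer. Second, your worry about ``uniform ad-nilpotency'' is a red herring: Theorem \ref{liealgbnilp} requires no uniform index, only that each commutator in a fixed finite generating set be ad-nilpotent of \emph{some} index. What you must actually supply is the reason every such commutator is ad-nilpotent: one restricts to a subgroup $Y$ generated by finitely many $A$-orbits, extends scalars by a primitive $q$th root of unity, and spans the generating subspace by common eigenvectors; since every character of $A\cong C_q\times C_q$ --- and likewise the product of the two characters attached to a pair of eigenvectors --- has nontrivial kernel, each generator and each commutator of eigenvectors lies in $C_{\overline{L}}(b)$ for some $b\in A^{\#}$, hence corresponds to torsion elements of $C_Y(b)$ and is ad-nilpotent by Lemma \ref{Laz} (with the $\otimes\,\mathbb{F}_p[\omega]$ bookkeeping handled as in the proof of Theorem \ref{PR} via Lemma \ref{L(GH)}); and after nilpotency of $L_p(Y)$ one still needs Lemma \ref{PA} plus the analytic-torsion argument to convert analyticity of $Y$ into finiteness.

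The other gap is your reduction of the general profinite case to the pronilpotent case, which as written would not survive scrutiny. The appeal to ``classification-based bounds on the non-nilpotent part'' has no purchase: periodicity of $C_G(a)$ imposes no exponent bound on its images in finite quotients, so quantitative results in the style of Theorem \ref{q2} do not engage, and there is no structure theorem reducing such a $G$ to pronilpotent-by-controlled. The detour is also unnecessary. The actual scheme, visible in the proof of Theorem \ref{PR}, avoids any pronilpotent reduction: each $\pi(C_G(a))$ is finite by Herfort \cite{Herf}; applying the order-$q^2$ analogue of Lemma \ref{FG2profi} to an $A$-invariant Sylow pro-$p$ subgroup $P$ gives $P=\langle C_P(A_1),\dots,C_P(A_s)\rangle$, whence $\pi(G)=\bigcup_{a\in A^{\#}}\pi(C_G(a))$ is finite; the Lie argument shows each $A$-invariant Sylow pro-$p$ subgroup is periodic, hence by conjugacy all Sylow pro-$p$ subgroups are; and for arbitrary $x\in G$ the procyclic group $\langle x\rangle$ is the product of its finitely many Sylow subgroups, each a periodic procyclic group and therefore finite, so $G$ is periodic and Zelmanov \cite{Z2} gives local finiteness. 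A smaller slip in the same spirit: your blanket claim that an infinite Cartesian product of nontrivial finite groups is never periodic is false (an infinite product of copies of $C_2$ has exponent $2$); your pronilpotent step is saved only because the product there runs over infinitely many \emph{distinct} primes.
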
  
Here and throughout the paper the symbol $A^{\#}$ stands for the set of non-identity elements of a group $A$.

Recall that using Wilson's theorem on the structure of  periodic profinite groups \cite{W1} Zelmanov  proved local finiteness of such groups \cite{Z2}. Another relevant result is Herfort's theorem that the set of prime divisors of orders of elements of a periodic profinite group is finite \cite{Herf}. It is a long-standing open problem whether every periodic profinite group has finite exponent. The proof of Theorem \ref{profinitoq2} also uses Lie-theoretic techniques in the spirit of those described in Section 3 of the present paper.

A quantitative version of Theorem \ref{profinitoq2} for finite groups was obtained earlier in \cite{KS}.

\begin{theorem}\label{q2}
Let $q$ be a prime, m a positive integer and $A$ an elementary abelian group of order $q^{2}$. Suppose that $A$ acts as a coprime group of automorphisms on a finite group $G$ and assume that $C_{G}(a)$ has exponent  dividing $m$ for each $a\in A^{\#}$. Then the exponent of $G$ is $\{m,q\}$-bounded.
\end{theorem}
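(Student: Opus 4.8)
The plan is to reduce the theorem to a statement about the Sylow subgroups and then attack the $p$-local problem with Lie-ring methods. Since $A$ acts coprimely we have $q\nmid|G|$, and because the exponent of $G$ is the least common multiple of the exponents of its Sylow subgroups, it suffices to produce a single $\{m,q\}$-bounded number bounding $\exp(P)$ for every $A$-invariant Sylow $p$-subgroup $P$ (each Sylow subgroup is conjugate to an $A$-invariant one, and these exist by coprimality). A first, purely group-theoretic observation bounds the primes involved: if $p>m$ then $C_P(a)$, being a $p$-group of exponent dividing $m$, is trivial for every $a\in A^{\#}$, and since $A$ is noncyclic abelian acting coprimely we have $P=\langle C_P(a):a\in A^{\#}\rangle=1$. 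Hence every prime divisor of $|G|$ is at most $m$, so there are only $\{m\}$-boundedly many primes to treat, and any per-prime bound (even one mildly depending on $p\le m$) combines into an $\{m,q\}$-bound on $\exp(G)$.

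So I fix an $A$-invariant Sylow $p$-subgroup $P$ with $p\le m$, $p\ne q$, noting $\exp(C_P(a))\mid m$ for each $a\in A^{\#}$, and must bound $\exp(P)$. I would pass to the associated $\mathbb{F}_p$-Lie algebra $L=L_p(P)$ built from the Jennings--Zassenhaus filtration: it is a restricted Lie algebra generated by its first homogeneous component, the action of $A$ lifts to a coprime action on $L$, and by exactness of fixed points under coprime action $C_L(A_i)$ is the Lie algebra associated with $C_P(A_i)$. Two structural facts drive the argument. First, since $A\cong C_q\times C_q$ has exactly $q+1$ subgroups $A_1,\dots,A_{q+1}$ of order $q$, the semisimple eigenspace decomposition of $L$ under the coprime action gives $L=\sum_i C_L(A_i)$, so that every homogeneous element of $L$ lies in some $C_L(A_i)\subseteq C_L(a)$. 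Second, if $x\in C_P(a)$ has order $p^t\mid m$ then its image $\bar x$ in $L$ satisfies $(\operatorname{ad}\bar x)^{p^t}=\operatorname{ad}(\bar x^{[p^t]})=0$; combined with the first fact, every homogeneous element of $L$ is ad-nilpotent of index at most $m$, independently of $p$.

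The heart of the proof is to upgrade this to nilpotency of $L$ of $\{m,q\}$-bounded class, and here I would invoke Zelmanov's theorems twice. The fixed-point subalgebra $C_L(A)=L_p(C_P(A))$ is the Lie algebra associated with a group of exponent dividing $m$, so by the Lie-ring part of the solution of the restricted Burnside problem it satisfies a polynomial identity of $\{m\}$-bounded degree; by the Bahturin--Zaicev--Linchenko theorem on fixed points of a coprime finite automorphism group, $L$ itself then satisfies a polynomial identity of $\{m,q\}$-bounded degree. A Lie algebra that satisfies a polynomial identity and is generated by ad-nilpotent elements of bounded index is nilpotent of bounded class by Zelmanov's theorem, so $\gamma_{c+1}(L)=0$ for an $\{m,q\}$-bounded $c$. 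Translating back through the filtration, $D_{c+1}(P)=1$, whence $\exp(P)\le p^{\lceil\log_p(c+1)\rceil}<p(c+1)\le m(c+1)$, an $\{m,q\}$-bound; as explained above this bounds $\exp(G)$.

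I expect the genuine difficulty to be concentrated in the Lie-theoretic step: controlling the nilpotency class of $L$ by a bound independent both of $|G|$ and of the (unbounded) number of generators of $L$. This is exactly where Zelmanov's deep machinery is indispensable, and where the hypothesis that $A$ has rank two is used essentially, so that the subalgebras $C_L(A_i)$ genuinely span $L$; a single automorphism of order $q$ would not suffice, since fixed-point-free automorphisms of $p$-groups of unbounded exponent exist. Care must also be taken that the passage from the restricted structure to the actual exponent of $P$ not reintroduce a dependence on $p$, which is precisely what the preliminary reduction to primes $p\le m$ secures.
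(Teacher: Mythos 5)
The paper does not actually prove Theorem \ref{q2} (it quotes it from \cite{KS}); but the machinery it would require is exactly that of Section 3 and of the paper's proof of Theorem \ref{MT}, and measured against that your argument has three genuine gaps. The central one: from the (correct) decomposition $L=\sum_i C_L(A_i)$ you infer that ``every homogeneous element of $L$ lies in some $C_L(A_i)$''. That inference is false --- the decomposition is a sum of subspaces, not a union, so a homogeneous element is in general a sum of fixed vectors of \emph{different} $A_i$'s and lies in no single centralizer. This error is fatal twice over: it was your only source of ad-nilpotency, and, worse, Theorem \ref{liealgbnilp} needs every \emph{commutator} in the chosen generators to be ad-nilpotent, not merely the generators; for $|A|=q^2$ a commutator $[u,v]$ with $u\in C_L(A_i)$, $v\in C_L(A_j)$, $i\neq j$, is not visibly in any $C_L(a)$, since $A_i\cap A_j=1$ (this is precisely the difficulty that forces $|A|=q^3$ in Theorem \ref{qqq}). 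The actual route, mirrored in the paper's proof of Theorem \ref{MT}, is to extend scalars to $\F_p[\omega]$ and generate $\overline{L}$ by common eigenvectors of $A$: each eigenvector lies in $C_{\overline{L}}(\ker\chi)$ with $\ker\chi\neq 1$ because characters of $A$ have cyclic image, and a commutator of eigenvectors is again an eigenvector, so \emph{all} commutators land in some $C_{\overline{L}}(a)$. Even then ad-nilpotency is not free: an element of $C_{\overline{L}}(a)$ has the form $y_0\otimes 1+\cdots+y_{q-2}\otimes\omega^{q-2}$ with $y_i\in C_L(a)$, and a sum of ad-nilpotent elements need not be ad-nilpotent; this is what Lemma \ref{L14}, applied to $K=\langle y_0,\dots,y_{q-2}\rangle$, is for. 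None of this is in your proposal.

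Two further steps fail at the ends of your argument. First, you apply Zelmanov's theorem to $L_p(P)$ for an entire Sylow subgroup $P$, but the class bound in the quantitative form of Theorem \ref{liealgbnilp} depends on the number of generators, which for $P$ is unbounded; this is not a difficulty Zelmanov's machinery absorbs, as you hope, but one that must be removed beforehand by the group-theoretic reduction to $Y=\langle x^A\rangle$ for a single $x\in P$, a subgroup with at most $q^2$ generators still satisfying the hypotheses (the analogue of the paper's reduction to $Y=\langle x_1^A,\dots,x_r^A\rangle$ in the proof of Theorem \ref{MT}). Second, the translation ``$\gamma_{c+1}(L)=0\Rightarrow D_{c+1}(P)=1\Rightarrow \exp(P)<p(c+1)$'' is simply false: nilpotency of $L_p(P)$ says nothing about the vanishing of the dimension subgroups, which are fed by $p$-th powers. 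A cyclic group of order $p^k$ has one-dimensional abelian $L_p$, yet $D_2=P^p\neq 1$ and exponent $p^k$; so Lie nilpotency alone can never bound an exponent. The correct endgame must re-use the hypothesis at the group level: bounded generation plus nilpotency of $L_p(Y)$ gives, by Lazard's theory, a powerful characteristic subgroup $W\leq Y$ of bounded index; $W$ is generated by $\bigcup_{a\in A^{\#}}C_W(a)$, whose elements have order dividing $m$, and in a powerful $p$-group this forces $\exp(W)$ to divide the $p$-part of $m$, whence $\exp(Y)\leq |Y:W|\exp(W)$ is bounded. Your preliminary reduction to primes $p\leq m$ and the use of Theorem \ref{PIandC} are sound, but the three missing ingredients above are exactly where the real content of \cite{KS} lies.
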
  

Another quantitative result of similar nature was proved in the paper of Guralnick and the third author \cite{GS}.

\begin{theorem}\label{qqq}
Let $m$ be an integer, $q$ a prime. Let $G$ be a finite $q'$-group acted on by an elementary abelian group $A$ of order $q^{3}$. Assume that $C_{G}(a)$ has derived group of exponent dividing $m$ for each $a\in A^{\#}$. Then the exponent of $G'$ is $\{m,q\}$-bounded. 
\end{theorem}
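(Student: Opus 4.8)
The plan is to translate the problem into one about the associated Lie ring and then deploy Zelmanov's nilpotency theorems. I begin with the standard coprime reductions. Since $A$ acts coprimely, we have $C_{G/N}(a)=C_G(a)N/N$ for every $A$-invariant normal subgroup $N$, and the hypothesis that $\exp(C_G(a)')$ divides $m$ is inherited by $A$-invariant sections; inducting along the Fitting series, the essential case is that of $G$ nilpotent, and then of a $p$-group $P$ with $p\ne q$. Two facts will be used constantly: for a corank-one subgroup $B\le A$ (elementary abelian of order $q^2$) one has $C_G(B)\le C_G(a)$ for each $a\in B^{\#}$, so $\exp\bigl(C_G(B)'\bigr)$ divides $m$; and $G$ is generated by the subgroups $C_G(B)$ as $B$ runs over the corank-one subgroups of $A$.

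In the $p$-group case I form the associated Lie ring $L=L_p(P)$ over $\F_p$, on which $A$ acts. Extending scalars to contain a primitive $q$-th root of unity, the coprime action diagonalises and $L=\bigoplus_{\lambda\in A^{*}}L_\lambda$ becomes graded by the dual group $A^{*}\cong\F_q^{3}$, with $[L_\lambda,L_\mu]\subseteq L_{\lambda+\mu}$. Here $C_L(a)=\bigoplus_{\lambda\in a^{\perp}}L_\lambda=:L_{W}$, where $W=a^{\perp}$ is a two-dimensional subspace, and these $W$ exhaust the hyperplanes of $\F_q^{3}$ as $a$ runs over $A^{\#}$. Through the standard bridge between exponents of sections of $P$ and ad-nilpotency in $L_p(P)$, the bound on $\exp(C_G(a)')$ says that every element of $[L_{W},L_{W}]$ is ad-nilpotent of index at most $m$ on all of $L$. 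The decisive feature of rank three appears at once: any two weights $\lambda,\mu\in\F_q^{3}$ span a subspace of dimension at most two and hence lie in a common $W$, so $[L_\lambda,L_\mu]\subseteq[L_{W},L_{W}]$ and every commutator of two homogeneous elements of $L$ is ad-nilpotent of index at most $m$. Already this disposes of the primes $p\nmid m$: there the index is $1$, so all $[L_\lambda,L_\mu]$ vanish, $L$ is abelian, $P'=1$, and only the finitely many primes dividing $m$ contribute to $G'$, whence a per-prime $\{m,q\}$-bound assembles to a bound on $\exp(G')$.

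Next I make $L$ into a PI-algebra. The zero component $L_0=C_L(A)$ lies in every $C_L(a)$, so $[L_0,L_0]\subseteq[L_W,L_W]$ consists of elements $z$ with $(\mathrm{ad}\,z)^{m}=0$; thus $L_0$ satisfies the nontrivial identity $(\mathrm{ad}[x_1,x_2])^{m}(x_3)\equiv 0$ and is PI. By the Bahturin--Zaicev--Linchenko theorem, a Lie algebra with a finite coprime group of automorphisms whose fixed subalgebra is PI is itself PI, so $L$ is PI.

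It remains to pass from these local data to global nilpotency, and this is where I expect the real difficulty to lie. I would invoke Zelmanov's nilpotency machinery in its graded form: a PI Lie algebra, graded by a finite group, in which every commutator of homogeneous elements is ad-nilpotent of bounded index, has $[L,L]$ nilpotent of class bounded in terms of $m$, $q$ and the identity; translating back through the Lazard correspondence then bounds $\exp(P')$ and hence $\exp(G')$. The obstacle is precisely the local-to-global step: the hypotheses control only the derived subalgebra of each separate $C_L(a)$, that is, brackets of homogeneous elements one pair at a time, whereas a typical homogeneous element of $[L,L]$ is a sum of such brackets arising from different subspaces $W$, and a sum of ad-nilpotent elements need not be ad-nilpotent. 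Forcing this pairwise, per-centralizer information to coalesce into a single nilpotency statement for $[L,L]$ is exactly what requires the full strength of the rank-three covering of $\F_q^{3}$ by its hyperplanes together with Zelmanov's combinatorial techniques, and it is also the point at which the bookkeeping of generators and the quantitative return to group exponents must be handled with care.
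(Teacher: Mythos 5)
Your skeleton is the right one --- it is essentially the strategy of \cite{GS}: eigenspace decomposition of the associated Lie ring over $\F_p[\omega]$, the rank-three observation that any two weights annihilate a common nontrivial element of $A$, Bakhturin--Zaicev--Linchenko for PI, Zelmanov for nilpotency. But three steps are genuinely broken. First, the reduction: ``inducting along the Fitting series'' does not reduce the problem to $p$-groups. The exponent of $G'$ is controlled by $\exp(G'N/N)$ together with $\exp(G'\cap N)$, not by $\exp(N')$, and the subgroup $[N,G]\leq G'\cap N$ is invisible both to the hypothesis applied to $N$ and to the quotient $G/N$; moreover $G$ is an arbitrary finite $q'$-group, not assumed soluble, so there is no Fitting series reaching $G$ at all. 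The proof in \cite{GS} rests instead on the generation theorems --- Lemma \ref{[N,G]}, Theorem \ref{T} and Theorem \ref{p1...pr}, i.e.\ that an $A$-invariant Sylow $p$-subgroup $P$ of $G'$ is the product of the subgroups $P\cap[C_G(A_i),C_G(A_j)]$, each contained in some $C_G(a)'$ --- and these are substantial theorems in their own right (they are precisely where the failure for $|A|=q^2$ is located), not routine coprime reductions; your Lemma-\ref{FG2}-type generation of $G$ by the $C_G(B)$ says nothing about $G'$. Second, your key claim that \emph{every} element of $[L_W,L_W]$ is ad-nilpotent of index at most $m$ is false: Lazard's Lemma \ref{Laz} gives ad-nilpotency only for homogeneous elements, that is, images $xD_{i+1}$ of single group elements $x\in C_G(a)'\cap D_i$, whereas a general element of $[C_L(a),C_L(a)]$ is a sum of such images across different degrees --- and a common eigenvector is furthermore an $\F_p[\omega]$-combination of them --- and sums of ad-nilpotent elements need not be ad-nilpotent. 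You concede this at the end, yet this very claim underlies both your PI argument for $L_0$ and your disposal of the primes $p\nmid m$. Relatedly, the inference ``$L$ abelian, hence $P'=1$'' is false: for odd $p$ the group $P=\langle x,y\mid x^{p^2}=y^{p^2}=1,\ x^y=x^{1+p}\rangle$ satisfies $D_2(P)=D_3(P)=P^p$ and one checks that all graded brackets of $L(P)$ vanish by degree reasons, so $L(P)$ is abelian while $P'=\langle x^p\rangle\neq 1$; passing from nilpotency of the Lie ring back to a statement about the group genuinely requires the generation data (boundedly many generators of $m$-bounded order supplied by Theorem \ref{p1...pr}) together with Lazard's theory of powerful groups, which your phrase ``translating back through the Lazard correspondence'' compresses away.

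Third, the endgame you invoke --- a ``graded form'' of Zelmanov's machinery asserting that a PI Lie algebra graded by $\F_q^3$ with all homogeneous commutators ad-nilpotent of bounded index has nilpotent derived algebra --- is not an available theorem, and you explicitly admit you cannot carry out the local-to-global step; but that step is the actual content of the proof. The known resolution, which is the proof of Theorem \ref{PR} in this paper and, in quantitative form, the argument of \cite{GS}, runs as follows: (i) obtain PI honestly, by applying Proposition \ref{WilZel} to the genuine group law $[x,y]^m\equiv 1$ satisfied by $C_G(a)$, transferring it to $C_L(a)$ via (\ref{CLp}), and then invoking Theorem \ref{PIandC}; (ii) for a bracket $[l_1,l_2]$ of two common eigenvectors, choose a nontrivial $a\in A_1\cap A_2$, write $[l_1,l_2]=\sum_j y_j\otimes\omega^j$ with each $y_j$ a \emph{homogeneous} element of $[C_L(a),C_L(a)]$ (hence ad-nilpotent by Lemma \ref{Laz}), set $K=\langle y_0,\dots,y_{q-2}\rangle$, prove $K$ nilpotent by Zelmanov's Theorem \ref{liealgbnilp}, and then apply Lemma \ref{L14} to conclude $[L,K,\dots,K]=0$, which makes every element of $\overline{K}$ --- in particular the non-homogeneous combination $[l_1,l_2]$ itself --- ad-nilpotent (this is packaged in Lemma \ref{L(GH)}); (iii) only then apply Theorem \ref{liealgbnilp} a second time, to the subalgebra generated by the finitely many eigenvectors spanning $\overline{M}$, with the number of generators controlled by Theorem \ref{p1...pr}. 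This two-tier use of Zelmanov's theorem mediated by Lemma \ref{L14} is exactly the idea missing from your proposal; without it the pairwise, per-centralizer information cannot be made to coalesce, as you yourself observe.
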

Note that the assumption that $|A|=q^{3}$ is essential here and the theorem fails if $|A|=q^{2}$, see Section 1 of \cite{GS} for more details.

In the present paper we exploit further the techniques developed in \cite{KS,SA,GS}.  If $G$ is a profinite group, we denote by $G'$ the closed subgroup generated by all commutators in $G$ (the derived group of $G$). The  main result of this paper is the following theorem.

\begin{theorem}
\label{MT}
Let $q$ be a prime, $A$ an elementary abelian group of order $q^3$. Suppose that $A$ acts as a coprime group of automorphisms on a profinite group $G$. Assume that $C_G(a)'$ is periodic for each $a\in  A^{\#}$. Then $G'$ is locally finite.
\end{theorem}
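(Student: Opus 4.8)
The plan is to reduce the local finiteness of $G'$ to a statement about finitely generated subgroups, and then to control those via the quantitative finite result, Theorem~\ref{qqq}. Since $G$ is profinite, $G'$ is locally finite precisely when every finitely generated closed subgroup of $G'$ is finite. By Wilson's theorem and Zelmanov's result cited in the introduction, to prove that a finitely generated profinite group is finite it suffices to show that it is periodic of finite exponent, so the crux becomes producing a uniform exponent bound on the relevant pieces of $G'$.

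First I would set up the coprime action machinery. Working inside the inverse system defining $G$, I would pass to finite quotients $G/N$ invariant under $A$, on each of which $A$ acts coprimely. The hypothesis that $C_G(a)'$ is periodic for each $a \in A^{\#}$ needs to be converted into an exponent hypothesis on the finite quotients: here I expect to invoke Herfort's theorem (the set of primes dividing element orders in a periodic profinite group is finite) together with a compactness/Baire-category argument to conclude that $C_G(a)'$ actually has finite exponent, say dividing some integer $m$, rather than merely being periodic. This step — upgrading ``periodic'' to ``bounded exponent'' on the centralizer derived groups — is where the genuinely profinite input is concentrated, and I anticipate it being delicate because a priori the exponents could depend on $a$ and fail to be uniformly bounded across the inverse system.

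Once a uniform bound $m$ is in hand, the finite quotients fall squarely under the hypotheses of Theorem~\ref{qqq}: each $G/N$ is a finite $q'$-group acted on by $A$ of order $q^3$ with $C_{G/N}(a)'$ of exponent dividing $m$ for all $a \in A^{\#}$ (using that centralizers in coprime actions are compatible with passage to quotients, so $C_{G/N}(a) = C_G(a)N/N$). Theorem~\ref{qqq} then yields that the exponent of $(G/N)'$ is $\{m,q\}$-bounded, by a bound that is independent of $N$. Since this bound is uniform over the entire inverse system, $G' = \varprojlim (G/N)'$ is periodic of finite exponent, hence by Wilson--Zelmanov it is locally finite, completing the argument.

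The main obstacle, as indicated, is the passage from periodicity to a uniform finite exponent on the $C_G(a)'$. A clean way to organize this is to first show that $G'$ itself is periodic (which would let one apply Zelmanov to finitely generated subgroups directly), and I would try to deduce periodicity of $G'$ from periodicity of the $C_G(a)'$ using the Lie-theoretic techniques of Section~3 together with the coprime decomposition $G = \prod_{a} C_G(a)$-type identities that hold in the elementary abelian rank situation. The secondary technical point is ensuring the exponent bound produced by Theorem~\ref{qqq} is genuinely independent of the chosen finite quotient; this follows formally because the bound in that theorem depends only on $m$ and $q$ and not on $|G|$, but it must be checked that the value of $m$ can be chosen once and for all, which again returns to the uniform-exponent question at the heart of the proof.
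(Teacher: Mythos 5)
There is a genuine gap, and it sits exactly where you yourself locate it: the step that upgrades ``$C_G(a)'$ is periodic'' to ``$C_G(a)'$ has finite exponent''. No Baire-category or compactness argument can deliver this. Baire's theorem applied to the closed sets $\{x : x^n=1\}$ (or, as in the paper, to sets of commutators of bounded order) yields only a \emph{coset identity} --- a bound on the orders of elements in a single coset $gU$ of an open subgroup --- not a bound on the exponent of the whole group. Whether every periodic profinite group has finite exponent is precisely the long-standing open problem mentioned in the introduction, and $C_G(a)'$ is a periodic profinite group of the completely general kind, so your step 2 would resolve that open problem as a by-product. The paper states this obstruction explicitly: it is the very reason the authors say Theorem \ref{MT} ``cannot be deduced directly from Theorem \ref{qqq}'' and that an independent proof is needed. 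Everything downstream of your step 2 (passing to finite quotients, $C_{G/N}(a)=C_G(a)N/N$, applying Theorem \ref{qqq} with a bound independent of $N$) is fine, but it never gets off the ground. Note also that the worry you flag about uniformity of $m$ across different $a$ is not the real difficulty --- $A^{\#}$ is finite, so one could take a least common multiple --- the problem is already fatal for a single $a$.

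Your fallback remark, ``first show that $G'$ itself is periodic using the Lie-theoretic techniques of Section 3,'' is indeed the strategy of the paper's actual proof, but in your write-up it is an unexecuted intention rather than an argument, and it makes the Theorem \ref{qqq} scaffolding redundant: once $G'$ is periodic, Zelmanov's theorem gives local finiteness immediately, with no exponent bound needed anywhere. The substance of the paper is exactly the part you skip: $\pi(G')$ is finite by Herfort plus the profinite version of Corollary \ref{CO}; each $A$-invariant Sylow pro-$p$ subgroup $P$ of $G'$ decomposes as $P=P_1\cdots P_r$ with each $P_k$ inside some $C_G(a)'$ (profinite version of Theorem \ref{p1...pr}); an arbitrary $x\in P$ then lies in a subgroup $Y$ generated by finitely many $A$-orbits of elements of the various $C_G(a)'$; and $Y$ is shown to be finite by proving that $L_p(Y)$ is nilpotent (via Lemma \ref{periodicandL(G)PI}, Theorem \ref{PIandC}, Lazard's Lemma \ref{Laz} and Zelmanov's Theorem \ref{liealgbnilp}, after extending scalars to diagonalize the $A$-action), whence $Y$ is $p$-adic analytic, and a final argument with a powerful open subgroup, Lemma \ref{[N,G]} and Schur's theorem forces $Y$ to be finite. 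Periodicity of $G'$, and hence local finiteness, falls out of this local analysis; at no point does anyone bound the exponent of anything globally.
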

We mention that as long as it is unknown whether every periodic profinite group has finite exponent the above theorem cannot be deduced directly from Theorem \ref{qqq}. Thus, in the present paper we present an independent proof of Theorem \ref{MT}.

\section{Preliminary Results}
In this section we collect some useful results about coprime automorphisms of finite and profinite groups.
We start with two well-known lemmas (see  \cite{GO}, 6.2.2, 6.2.4).

\begin{lemma}
\label{FG1} 
Let $A$ be a  group of automorphisms of the finite group $G$ with $(|A|,|G|)=1$. 
\begin{enumerate}
\item If $N$ is any  $A$-invariant normal  subgroup of $G$, then \\$C_{G/N}(A)=C_G(A)N/N$;
\item If $H$ is any $A$-invariant $p$-subgroup of $G$, then $H$ is contained in an $A$-invariant Sylow $p$-subgroup of $G$.
\end{enumerate}
\end{lemma}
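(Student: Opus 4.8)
The plan is to reduce both statements to the splitting and conjugacy parts of the Schur--Zassenhaus theorem, applied inside the semidirect product $\Gamma = G \rtimes A$; note that since $(|A|,|G|)=1$ one of the two factors has odd order, so the solvability hypothesis needed for Schur--Zassenhaus is supplied by the Feit--Thompson theorem and need not be assumed.

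For part (1) the inclusion $C_G(A)N/N \subseteq C_{G/N}(A)$ is immediate, so the content lies in the reverse inclusion. Given $\bar g = gN$ fixed by $A$, I would first record that $g^{-1}g^{a}\in N$ for every $a\in A$; since $N\trianglelefteq G$ this forces $gAg^{-1}\subseteq NA$ inside $\Gamma$. Thus $A$ and $gAg^{-1}$ are two complements to the normal Hall subgroup $N$ in the group $M=N\langle A\rangle = N\rtimes A$: each meets $N$ trivially by coprimality of orders and has order $|A|=[M:N]$. By the conjugacy part of Schur--Zassenhaus there is $n\in N$ with $n(gAg^{-1})n^{-1}=A$, so that $c:=ng$ lies in $G$, satisfies $cAc^{-1}=A$, and lies in the coset $gN$ because $ng=g(g^{-1}ng)$. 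A short computation in the semidirect product then shows that an element of $G$ normalizing $A$ must in fact centralize it: writing the $A$-action as conjugation, $c\,a\,c^{-1}a^{-1}$ has its $G$-part and $A$-part separate, forcing $c\,(^{a}c)^{-1}=1$, hence $c\in C_G(A)$. This exhibits the required representative.

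For part (2) I would first treat the case $H=1$, i.e.\ the \emph{existence} of an $A$-invariant Sylow $p$-subgroup, and prove it once and for all inside $\Gamma$: take any $P\in\mathrm{Syl}_p(G)=\mathrm{Syl}_p(\Gamma)$ (equal since $p\nmid|A|$), apply the Frattini argument $\Gamma=G\,N_\Gamma(P)$ to see that $N_G(P)$ is a normal Hall subgroup of $N_\Gamma(P)$ of index $|A|$, extract a complement $B$ to it by Schur--Zassenhaus, observe that $B$ is then a complement to $G$ in $\Gamma$ exactly as $A$ is, conjugate $B$ to $A$ by some $g\in G$, and conclude that $A$ normalises the conjugate Sylow subgroup $gPg^{-1}$. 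With existence available, I would prove the general statement by induction on the index $[G:H]$. If $H$ is already a Sylow $p$-subgroup it is its own $A$-invariant Sylow overgroup; otherwise set $N=N_G(H)$, which is $A$-invariant, and choose an $A$-invariant Sylow $p$-subgroup $S$ of $N$ containing the normal $p$-subgroup $H$. The normalizer-growth property of $p$-groups (a proper subgroup is properly contained in its normalizer) shows that $H=S$ would force $H$ to be Sylow in $G$; hence $H<S$, the index strictly drops, and the inductive hypothesis applied to the larger $A$-invariant $p$-subgroup $S$ yields an $A$-invariant Sylow $p$-subgroup of $G$ containing $S\supseteq H$.

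The main obstacle here is bookkeeping rather than depth: at each invocation one must check that the relevant subgroups are genuinely complements of coprime order, so that Schur--Zassenhaus applies, and that every construction is compatible with the $A$-action. The one genuinely delicate point is the index-reduction in part (2) --- establishing that an $A$-invariant $p$-subgroup which is Sylow in its own normalizer is already Sylow in $G$ --- which is precisely where the normalizer-growth property of $p$-groups is essential.
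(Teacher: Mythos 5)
Your proof is correct, and it follows essentially the same route as the paper's source: the paper states this lemma without proof, citing Gorenstein (6.2.2, 6.2.4), and those proofs are precisely your argument --- Schur--Zassenhaus splitting and conjugacy inside the semidirect product $G\rtimes A$ (with Feit--Thompson supplying solvability), plus the Frattini argument and induction via $N_G(H)$ for the invariant Sylow statement. No gaps; the details you flag (conjugator taken in $N$, the coset bookkeeping $ng\in gN$, and the normalizer-growth step) are all handled correctly.
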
 
 
\begin{lemma}
\label{FG2} 
Let $q$ be a prime, $G$ a finite $q'$-group acted on by an elementary abelian $q$-group $A$ of order $q^3$.\  Let $A_1, \dots,A_s$ be the maximal subgroups of $A$.\ If $H$ is an $A$-invariant subgroup of $G$ we have $H=\langle C_H(A_1),\dots,C_H(A_s)\rangle$.
\end{lemma}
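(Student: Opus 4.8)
The plan is to prove a generation statement of this kind by reducing it, in several stages, to the case in which $H$ is an elementary abelian $p$-group viewed as a module for $A$. Throughout I would set $H_0=\langle C_H(A_1),\dots,C_H(A_s)\rangle$; since $A$ is abelian it normalizes each $A_i$ and hence each $C_H(A_i)$, so $H_0$ is $A$-invariant, and as $H_0\subseteq H$ is automatic the entire content is the reverse inclusion $H\subseteq H_0$.

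First I would reduce to the case of a $p$-group. For each prime $p$ dividing $|H|$, coprimeness of the action (Lemma~\ref{FG1}(2), applied to the trivial subgroup) supplies an $A$-invariant Sylow $p$-subgroup $P_p$ of $H$. The subgroup $L=\langle P_p : p\mid |H|\rangle$ contains a full Sylow $p$-subgroup of $H$ for every $p$, so $|H|$ divides $|L|$ and therefore $L=H$. Consequently it suffices to show $P\subseteq H_0$ for each $A$-invariant Sylow subgroup $P$; and since $C_P(A_i)=P\cap C_H(A_i)\subseteq C_H(A_i)\subseteq H_0$, this follows once we know $P=\langle C_P(A_i):1\le i\le s\rangle$. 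Thus the lemma is reduced to the case in which $H$ is a $p$-group with $p\ne q$.

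For a $p$-group $P$ I would pass to the Frattini quotient. The subgroup $\Phi(P)$ is characteristic, hence $A$-invariant and normal in $P$, and $V:=P/\Phi(P)$ is elementary abelian, i.e. an $\mathbb{F}_pA$-module. Granting the module case below, namely $V=\sum_i C_V(A_i)$, Lemma~\ref{FG1}(1) identifies $C_V(A_i)$ with $C_P(A_i)\Phi(P)/\Phi(P)$, so the images of the $C_P(A_i)$ generate $P/\Phi(P)$; that is, $\langle C_P(A_i)\rangle\,\Phi(P)=P$. By the defining non-generation property of the Frattini subgroup this forces $\langle C_P(A_i)\rangle=P$, as required.

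It remains to treat the heart of the matter: an $\mathbb{F}_pA$-module $V$ with $p\ne q$, for which I must show $V=\sum_{i}C_V(A_i)$. Since $(|A|,p)=1$, Maschke's theorem makes $V$ completely reducible, and after extending scalars to $F=\overline{\mathbb{F}}_p$ the abelian group $A$ of exponent $q$ is diagonalizable, so $V\otimes F=\bigoplus_\chi V_\chi$ over the linear characters $\chi\colon A\to F^{\times}$. For each occurring $\chi$ the image $\chi(A)$ is a finite subgroup of $F^{\times}$ of exponent dividing $q$, hence cyclic of order $1$ or $q$; thus $\ker\chi$ has index $1$ or $q$ in $A$ and therefore contains one of the maximal subgroups $A_{i}$, whence $V_\chi\subseteq C_{V\otimes F}(A_i)$ for a suitable $i$. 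Summing gives $V\otimes F=\sum_i C_{V\otimes F}(A_i)$. The point requiring care is the descent back to $\mathbb{F}_p$: the fixed spaces are the images of the idempotents $e_i=|A_i|^{-1}\sum_{a\in A_i}a$, which are defined over $\mathbb{F}_p$, so $C_{V\otimes F}(A_i)=C_V(A_i)\otimes F$ and hence $\bigl(\sum_i C_V(A_i)\bigr)\otimes F=V\otimes F$; comparing $\mathbb{F}_p$-dimensions yields $\sum_i C_V(A_i)=V$. I expect this diagonalization-and-descent step to be the only genuinely substantive part, the two preceding reductions being formal consequences of coprimeness together with Lemma~\ref{FG1}.
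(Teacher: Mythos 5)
Your proposal is correct: the Sylow reduction via Lemma~\ref{FG1}(2), the Frattini-quotient step via Lemma~\ref{FG1}(1) together with the non-generator property, and the eigenspace decomposition over a splitting field with descent through the idempotents $e_i=|A_i|^{-1}\sum_{a\in A_i}a$ are all sound, and the hypothesis $p\neq q$ is used exactly where needed (invariant Sylows, diagonalizability, invertibility of $|A_i|$). The paper offers no proof of this lemma---it is quoted as well known with a citation to Gorenstein \cite{GO}, 6.2.2 and 6.2.4---and your argument is essentially the classical proof found there, so this counts as the same approach rather than a new route.
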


The next results are taken from  \cite{GS}.

\begin{lemma}
\label{[N,G]}
Let $q$ be a prime, $G$ a finite $q'$-group acted on by an elementary abelian $q$-group $A$ of order $q^3$. If $N$ is any $A$-invariant normal subgroup of $G$ then $[N,G]=\left\langle [C_N(a),C_G(a)] \mid a \in A^{\#}\right\rangle$. If $[N,G]$ is nilpotent then $[N,G]= \prod[C_N(a),C_G(a)]$, where the product is taken over all $a \in A^{\#}$.
\end{lemma}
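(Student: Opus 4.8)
\emph{Write} $K=\langle [C_N(a),C_G(a)] \mid a\in A^{\#}\rangle$. The inclusion $K\le [N,G]$ is immediate, since $C_N(a)\le N$ and $C_G(a)\le G$. Moreover each $C_N(a)$ and each $C_G(a)$ is $A$-invariant (as $A$ is abelian, $a^{b}=a$ for $b\in A$, and $N$ is $A$-invariant), so every generator of $K$, and hence $K$ itself, is $A$-invariant. The whole point is therefore to establish the reverse inclusion $[N,G]\le K$; the plan is to reduce this, via a quotient, to the following special case.

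\emph{Reduced claim:} if $[C_N(a),C_G(a)]=1$ for every $a\in A^{\#}$, then $[N,G]=1$. This is where the hypothesis $|A|=q^{3}$ enters essentially. By Lemma~\ref{FG2} both $N$ and $G$ are generated by the fixed-point subgroups of the maximal subgroups $A_{1},\dots,A_{s}$ of $A$, so $[N,G]=1$ will follow once I show $[C_N(A_{i}),C_G(A_{j})]=1$ for all $i,j$. Since each $A_{i}$ has order $q^{2}$ and $A$ has rank $3$, any two maximal subgroups meet nontrivially, so $A_{i}\cap A_{j}$ has order at least $q$. Choosing $a\in (A_{i}\cap A_{j})^{\#}$ gives $C_N(A_{i})\le C_N(a)$ and $C_G(A_{j})\le C_G(a)$, whence $[C_N(A_{i}),C_G(A_{j})]\le [C_N(a),C_G(a)]=1$. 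Note that this step breaks down for $|A|=q^{2}$, where distinct maximal subgroups intersect trivially.

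To pass from the reduced claim to the full first assertion, I would factor out $K$. Working in $\bar G=G/K$, Lemma~\ref{FG1}(1) applied to the cyclic group $\langle a\rangle$ gives $C_{\bar G}(a)=C_G(a)K/K$ and $C_{\bar N}(a)=C_N(a)K/K$, so $[C_{\bar N}(a),C_{\bar G}(a)]=\overline{[C_N(a),C_G(a)]}=\bar 1$ for every $a$. The reduced claim then yields $[\bar N,\bar G]=\bar1$, that is $[N,G]\le K$, as desired. The main obstacle is that this quotient only makes sense once $K$ is known to be normal in $G$; a priori the case analysis above only gives $[N,G]=\langle[C_N(A_i),C_G(A_j)]\rangle^{G}\le K^{G}$, the normal closure. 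I expect the normality of $K$ to be the crux of the argument. To prove it I would use that $G=\langle C_G(b)\mid b\in A^{\#}\rangle$ (again a consequence of Lemma~\ref{FG2}) and check that each $C_G(b)$ normalizes $K$: conjugation of $[C_N(a),C_G(a)]$ by $c\in C_G(b)$ is harmless when $\langle b\rangle=\langle a\rangle$, since then $c\in C_G(a)$ stabilizes both $C_N(a)$ and $C_G(a)$; the genuinely delicate situation is $a\notin\langle b\rangle$, which I would attack by decomposing $C_N(a)$ and $C_G(a)$ a second time, now under the rank-two action of $A/\langle a\rangle$, and matching the resulting pieces against the generators of $K$.

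Finally, for the product assertion, assume $[N,G]=K$ is nilpotent. Then $[N,G]$ is the direct product of its (characteristic, hence $A$-invariant) Sylow subgroups, and it suffices to treat one prime at a time, so I may assume $[N,G]$ is a $p$-group. In a nilpotent group the subgroup generated by finitely many normal subgroups equals their product, so the claim reduces to verifying that each factor $[C_N(a),C_G(a)]$ is normal in $[N,G]$; this I would obtain from the $A$-invariant structure together with the first part of the lemma, giving $[N,G]=\prod_{a\in A^{\#}}[C_N(a),C_G(a)]$.
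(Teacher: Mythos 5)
First, a framing remark: the paper you were given does not prove Lemma \ref{[N,G]} at all --- it is quoted from \cite{GS} (``The next results are taken from \cite{GS}''), so the benchmark is the original argument of Guralnick and Shumyatsky, not anything in this text. Your preliminary reductions are correct and are the natural ones: $K\le[N,G]$ and the $A$-invariance of $K$ are immediate; your ``reduced claim'' is proved correctly, and its proof (two maximal subgroups of a rank-three elementary abelian group intersect nontrivially, combined with Lemma \ref{FG2}) is exactly the right way to exploit $|A|=q^{3}$; and the passage to the quotient via Lemma \ref{FG1}(1) is sound. But, as you observe yourself, that passage only yields $[N,G]=K^{G}$, the normal closure.

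That is precisely where the lemma starts, and your proposal stops there. The normality of $K$ is not a deferred technicality but the actual mathematical content of the statement, and the plan you sketch cannot close it: if you re-decompose $C_N(a)=\langle C_N(B):a\in B\rangle$ and $C_G(a)=\langle C_G(B'):a\in B'\rangle$ under the rank-two action of $A/\langle a\rangle$, then the commutator of two generated subgroups is again only a normal closure, $[C_N(a),C_G(a)]=\langle[C_N(B),C_G(B')] : a\in B\cap B'\rangle^{C_G(a)}$, so the conjugation problem reappears verbatim one level down. A second, independent gap is in the nilpotent assertion: your reduction is valid only if each factor $[C_N(a),C_G(a)]$ is normal in $[N,G]$, and there is no visible reason for this --- that subgroup is normal in $C_G(a)$, not in $[N,G]$, and the first part of the lemma does not supply such normality. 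Closing both gaps requires genuine induction, which is how it is done in \cite{GS}. To indicate what is missing: for the first assertion one can induct on $|N|$; when $[N,G]<N$ the inductive hypothesis applied to the pair $(G,[N,G])$ gives $[N,G,G]=\langle[C_{[N,G]}(a),C_G(a)] : a\in A^{\#}\rangle\le K$, and modulo $[N,G,G]$ the subgroup $[N,G]$ is central, so the commutator map $N\times G\to[N,G]$ becomes bilinear and Lemma \ref{FG2} lets one write $[N,G]$ as a product of the subgroups $[C_N(A_i),C_G(A_j)]\le K$ with no normal closures appearing; but the case $[N,G]=N$ (for instance $G$ perfect) still needs separate treatment, and the product statement needs its own induction on the nilpotency class. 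None of this is present, so the proposal, while correctly organized and honest about what it lacks, does not prove the lemma.
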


\begin{theorem}\label{T} Let $q$ be a prime and $G$ a finite $q'$-group acted on by an elementary abelian $q$-group $A$ of order $q^3$. Let $P$ be an $A$-invariant Sylow $p$-subgroup of $G'$. Then $P=\langle C_G(a)'\cap P\mid a\in A^{\#}\rangle$.
\end{theorem}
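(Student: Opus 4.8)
The plan is to turn the statement into a generation problem and then to isolate the case in which $G'$ is a $p$-group, where everything is transparent. First I would apply Lemma~\ref{[N,G]} with $N=G$. Since $[G,G]=G'$ and $[C_G(a),C_G(a)]=C_G(a)'$, this gives the generation
\[
G'=\langle C_G(a)'\mid a\in A^{\#}\rangle .
\]
Because $A$ is abelian, each $C_G(a)$, and hence each $C_G(a)'$, is $A$-invariant, so the subgroup $R:=\langle C_G(a)'\cap P\mid a\in A^{\#}\rangle\le P$ is $A$-invariant as well; the goal is to prove $R=P$. The decisive special case is that in which $G'$ is a $p$-group: then $G'=P$, each $C_G(a)'\le P$, so $C_G(a)'\cap P=C_G(a)'$ and $R=\langle C_G(a)'\rangle=G'=P$. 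Thus the entire difficulty is to discard the part of $G'$ of order prime to $p$.

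I would proceed by induction on $|G|$, inspecting a minimal $A$-invariant normal subgroup $N$ of $G$. If $N\not\le G'$, then $N\cap G'=1$ by minimality and $[N,G']\le N\cap G'=1$, so $N$ centralises $G'$; the quotient map restricts to an isomorphism $G'\to\overline{G'}=G'N/N$ sending $P$ to $\overline P$ and $C_G(a)'\cap P$ to $C_{\overline G}(a)'\cap\overline P$ (using $C_{\overline G}(a)=\overline{C_G(a)}$ from Lemma~\ref{FG1}(1) applied to $\langle a\rangle$), so the inductive conclusion for $\overline G$ transports verbatim to $G$. If $N\le G'$ and $N$ is a $p$-group, then $N\le P$, and here the passage to $\overline G=G/N$ is \emph{clean}: for $N\le P$ one checks $C_G(a)'N\cap P=(C_G(a)'\cap P)N$, whence $C_{\overline G}(a)'\cap\overline P=\overline{C_G(a)'\cap P}$, and induction yields $\overline R=\overline P$, i.e.\ $RN=P$. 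To upgrade this to $R=P$ I would use the dichotomy for the minimal normal subgroup: if $N=[N,G]$, then Lemma~\ref{[N,G]} gives $N=\langle[C_N(a),C_G(a)]\mid a\rangle$, and since $C_N(a)\le C_G(a)$ we get $[C_N(a),C_G(a)]\le C_G(a)'\cap N\le C_G(a)'\cap P\le R$, so $N\le R$ and $R=RN=P$.

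The main obstacle is everything left over, namely the configuration in which $G'$ has order divisible by primes other than $p$, so that the only minimal $A$-invariant normal subgroups inside $G'$ are either central $p$-subgroups or subgroups that are not $p$-groups at all. One cannot dispose of the latter by simply factoring out a $p'$-subgroup, since the intersection $C_G(a)'\cap P$ is \emph{not} preserved by $p'$-quotients: already in $G'\cong S_3$ with $p=2$ two distinct Sylow $2$-subgroups meet trivially yet have equal nontrivial images modulo $O_{2'}(G')$, so the inductive generation collapses precisely on the $p'$-part that the theorem is meant to throw away. The residual central $p$-section case is similarly not closed by the clean lifting above, and this is where the real work lies.

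To overcome this I would avoid $p'$-quotients entirely and instead aim to reduce to the case in which $G'$ is nilpotent; there the product form of Lemma~\ref{[N,G]} gives $G'=\prod_a C_G(a)'$, the Sylow $p$-subgroup decomposes as $P=\prod_a\bigl(C_G(a)'\cap P\bigr)$, and $R=P$ follows immediately. The reduction to the nilpotent (ultimately $p$-group) case is to be carried out with the coprime-action machinery of Section~2 rather than by any further quotient: Lemma~\ref{FG2} to localise $P$ as $P=\langle C_P(A_1),\dots,C_P(A_s)\rangle$, the conjugacy of $A$-invariant Sylow subgroups under $C_G(A)$ to align the Sylow subgroups of the various $C_G(a)'$ with $P$, and the order comparisons forced by coprimeness to convert equalities of images back into equalities of subgroups. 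I expect the alignment of these $A$-invariant Sylow subgroups, and the treatment of the central $p$-sections, to be the genuinely delicate points.
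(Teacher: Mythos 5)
The paper itself contains no proof of Theorem \ref{T}: it is quoted from \cite{GS} (and its refinement, Theorem \ref{p1...pr}, from \cite{Sp}), so the benchmark is the original proof there, which is a substantial induction that ultimately relies on classification-dependent facts about finite simple groups. Your preliminary reductions are correct as far as they go: with $R=\langle C_G(a)'\cap P\mid a\in A^{\#}\rangle$, the case $N\cap G'=1$ transports cleanly along the isomorphism $G'\to G'N/N$; for a $p$-group $N\le G'$ you correctly get $N\le P$, the modular-law identity $C_G(a)'N\cap P=(C_G(a)'\cap P)N$, hence $RN=P$ by induction, and when $[N,G]=N$ Lemma \ref{[N,G]} indeed pulls $N$ into $R$. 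But everything you have actually proved stops exactly where the theorem begins.

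Two cases are left open, and the strategy you sketch for them would not close either. In the central case ($[N,G]=1$, so $N\le Z(G)\cap G'$ is a $p$-group) you have $P=RN$ with $R\cap N$ equal to $1$ or $N$ by minimality; if it is $1$ then $P=R\times N$, and nothing in the Section~2 toolkit forces $N\le R$. You also cannot dodge this case by choosing a better $N$: a quasisimple group with centre of order $p$ has no other minimal $A$-invariant normal subgroup, and ruling out such configurations is itself a simple-group problem. In the case where $N\le G'$ is not a $p$-group --- in particular when $N$ is a product of nonabelian simple groups --- your own $S_3$ example shows the inductive intersection data is destroyed in the quotient, and conjugating $A$-invariant Sylow subgroups by elements of $C_G(A)$ produces conjugating elements you have no mechanism to absorb. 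Your fallback, ``reduce to the case where $G'$ is nilpotent,'' is circular: that reduction \emph{is} the theorem. A structural warning sign: nothing in your unresolved cases exploits rank $3$ rather than rank $2$ (Lemma \ref{[N,G]} is only used in the cases you already settled), yet the theorem is false for $|A|=q^2$, with counterexamples arising from (almost) simple groups. In \cite{GS} precisely these cases are handled by analysing the permutation action of $A$ on the simple factors of $N$ together with classification-dependent information about automorphism groups of finite simple groups; that input is entirely absent from, and cannot be replaced by, the coprime-action formalities your plan invokes.
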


Let $\pi(G)$ denote the set of primes dividing the order of a finite group $G$. 
\begin{corollary}\label{CO}
$\pi(G')=\cup_{a\in A^{\#}}\pi(C_G(a)')$.
\end{corollary}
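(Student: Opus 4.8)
The plan is to derive the equality of prime sets directly from Theorem \ref{T}, proving the two inclusions separately. For the inclusion $\bigcup_{a\in A^{\#}}\pi(C_G(a)')\subseteq\pi(G')$ there is essentially nothing to do: for each $a\in A^{\#}$ the centralizer $C_G(a)$ is a subgroup of $G$, so $C_G(a)'=[C_G(a),C_G(a)]$ is contained in $G'$, whence every prime dividing $|C_G(a)'|$ also divides $|G'|$. Taking the union over all $a\in A^{\#}$ gives this inclusion at once.

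For the reverse inclusion $\pi(G')\subseteq\bigcup_{a\in A^{\#}}\pi(C_G(a)')$ I would argue as follows. Fix a prime $p\in\pi(G')$. Since $G'$ is a characteristic subgroup of $G$ it is $A$-invariant, and because $G$ is a $q'$-group while $A$ is a $q$-group we have $(|A|,|G'|)=1$; by Lemma \ref{FG1}(2) we may therefore choose an $A$-invariant Sylow $p$-subgroup $P$ of $G'$. As $p\in\pi(G')$ this subgroup is nontrivial. Now Theorem \ref{T} tells us that $P=\langle C_G(a)'\cap P\mid a\in A^{\#}\rangle$. Were every factor $C_G(a)'\cap P$ trivial, the group $P$ would itself be trivial, contradicting $P\neq 1$. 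Hence there is some $a\in A^{\#}$ with $C_G(a)'\cap P\neq 1$, and for this $a$ the prime $p$ divides $|C_G(a)'\cap P|$, which divides $|C_G(a)'|$. Thus $p\in\pi(C_G(a)')$, completing the inclusion and the proof.

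I do not expect a genuine obstacle here, since Theorem \ref{T} carries all the weight: the corollary is really just the passage from a generation statement about an $A$-invariant Sylow subgroup to the corresponding statement about prime divisors. The only point requiring a little care is the preliminary reduction, namely the passage to an \emph{$A$-invariant} Sylow $p$-subgroup of $G'$, which is exactly where the coprimeness of the action and Lemma \ref{FG1}(2) are used; without $A$-invariance one cannot invoke Theorem \ref{T}. Everything else is a routine divisibility argument.
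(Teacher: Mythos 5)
Your proof is correct and is exactly the intended deduction: the corollary is stated as an immediate consequence of Theorem \ref{T}, obtained by applying Lemma \ref{FG1}(2) to get an $A$-invariant Sylow $p$-subgroup of $G'$ for each $p\in\pi(G')$ and then reading off the nontriviality of some $C_G(a)'\cap P$ from the generation statement, with the reverse inclusion being trivial since $C_G(a)'\leq G'$. No gaps, and no meaningful difference from the paper's (implicit) argument.
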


We will now describe a somewhat more precise version of Theorem \ref{T}. It was obtained in \cite{Sp}. Assume the hypothesis of Theorem \ref{T} and let $A_{1},\ldots,A_{s}$ be the maximal subgroups of $A$. Let $P_{1},\ldots,P_{r}$ be all the subgroups of the form 
$$
P\cap [C_{G}(A_{i}),C_{G}(A_{j})]\quad \text{for suitable}\, i,j.
$$ 
Observe that, since the intersection $A_{i}\cap A_{j}$ cannot be trivial because the rank of $A$ is 3,  any subgroup $P_{l}$, where $l=1,\ldots,r$, is contained in $[C_{G}(a),C_{G}(a)]$ for some $a\in A^{\#}$.  We have the following theorem (see Theorems 2.12 and 2.13 in \cite{Sp}).

\begin{theorem}\label{p1...pr} $P=P_{1}\cdots P_{r}$.
\end{theorem}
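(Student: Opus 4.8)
The plan is to derive Theorem~\ref{p1...pr} from Theorem~\ref{T} in two steps: first I would show that the subgroups $P_{1},\dots,P_{r}$ \emph{generate} $P$, and then promote this to the stated product decomposition by inducting on the nilpotency class of $P$. For the generation step, fix $a\in A^{\#}$. Since $A$ is abelian, $C_{G}(a)$ is $A$-invariant, so Lemma~\ref{FG2} gives $C_{G}(a)=\langle C_{C_{G}(a)}(A_{1}),\dots,C_{C_{G}(a)}(A_{s})\rangle$. A direct computation shows that $C_{C_{G}(a)}(A_{i})=C_{G}(a)\cap C_{G}(A_{i})$ equals $C_{G}(A_{i})$ when $a\in A_{i}$, and equals $C_{G}(A)\le C_{G}(A_{i})$ otherwise (using $\langle a,A_{i}\rangle=A$ for $a\notin A_{i}$). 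Hence $C_{G}(a)=\langle C_{G}(A_{i})\mid a\in A_{i}\rangle$, and therefore $C_{G}(a)'$ is generated by the commutator subgroups $[C_{G}(A_{i}),C_{G}(A_{j})]$ with $a\in A_{i}\cap A_{j}$. Feeding this into Theorem~\ref{T} and passing throughout to $A$-invariant Sylow $p$-subgroups by Lemma~\ref{FG1}(2), I obtain $P=\langle P_{1},\dots,P_{r}\rangle$.

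To upgrade generation to a product, I would induct on the nilpotency class $c$ of $P$. Working modulo the central subgroup $\gamma_{c}(P)$, and noting that the images of the $P_{l}$ are the subgroups attached in the same way to the induced action on the section, the inductive hypothesis yields $P=P_{1}\cdots P_{r}\,\gamma_{c}(P)$ as a set product. The task is then to absorb $\gamma_{c}(P)$ into $P_{1}\cdots P_{r}$. Since this last layer is abelian, products of subgroups behave well there, and the product form of Lemma~\ref{[N,G]}, applied to a suitable $A$-invariant section, is exactly the kind of tool that converts a commutator layer generated by the relevant pieces into a genuine product of terms of the shape $[C_{N}(a),C_{G}(a)]$; each such piece, after intersecting with $P$, lands inside one of the $P_{l}$ because every $A_{i}\cap A_{j}$ is nontrivial and hence every relevant commutator subgroup is contained in some $C_{G}(a)'$.

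The step I expect to be the main obstacle is the rearrangement underlying the product decomposition: one must show that an arbitrary product of elements drawn from the $P_{l}$, taken in any order, can be rewritten in the fixed order $P_{1}\cdots P_{r}$ modulo deeper terms of the lower central series. This amounts to controlling the commutators $[P_{l},P_{m}]$ and checking that they are reabsorbed into $P_{1}\cdots P_{r}$ at each stage of the class induction. Here the precise definition of the $P_{l}$ as the intersections $P\cap[C_{G}(A_{i}),C_{G}(A_{j})]$ is essential, since it is what ties each building block to a specific pair of maximal subgroups, and hence to a specific $C_{G}(a)'$; keeping track of how commutators of two such blocks distribute over the list $P_{1},\dots,P_{r}$ is the delicate bookkeeping at the heart of the argument. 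Once it is carried out, the set $P_{1}\cdots P_{r}$ is closed under multiplication and contains each $P_{l}$, so it contains $\langle P_{1},\dots,P_{r}\rangle=P$; as the reverse inclusion is obvious, the desired equality $P=P_{1}\cdots P_{r}$ follows.
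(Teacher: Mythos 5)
The first thing to note is that the paper itself offers no proof of Theorem \ref{p1...pr}: it is imported from \cite{Sp} (Theorems 2.12 and 2.13 there), where it is established by a separate, nontrivial argument, and it is presented as a sharpening of Theorem \ref{T} rather than a consequence of it. Your proposal tries to obtain it formally from Theorem \ref{T}, and both halves of the deduction contain genuine gaps. In the generation step, from the (correct) identity $C_G(a)=\langle C_G(A_i)\mid a\in A_i\rangle$ you infer that $C_G(a)'$ is generated by the subgroups $[C_G(A_i),C_G(A_j)]$ with $a\in A_i\cap A_j$. That inference is invalid: if $H=\langle X_1,\dots,X_k\rangle$, then $H'$ is the \emph{normal closure} in $H$ of $\langle [X_i,X_j]\mid i,j\rangle$, and without normality of that subgroup the conjugates cannot be dropped; showing that they can be dispensed with in this particular situation is precisely the hard content of the results of \cite{GS} and \cite{Sp}, not a formal manipulation. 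Moreover, even granting that identity, ``feeding it into Theorem \ref{T}'' requires $C_G(a)'\cap P\le\langle P_1,\dots,P_r\rangle$, and intersection with $P$ does not distribute over joins of subgroups: from $P=\langle C_G(a)'\cap P\mid a\in A^{\#}\rangle$ together with a generating set for each $C_G(a)'$ one cannot conclude $P=\langle P_1,\dots,P_r\rangle$. So even the generation statement is not established.

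The upgrade to a product is also unsound. Your induction passes to $P/\gamma_c(P)$, but $\gamma_c(P)$ is normal only in $P$ (and its normalizer), not in $G$, since $P$ is merely an $A$-invariant Sylow subgroup of $G'$. Consequently there is no ``induced action on the section'' in which the images of the $P_l$ are ``the subgroups attached in the same way'': the $P_l$ are defined by intersecting $P$ with the subgroups $[C_G(A_i),C_G(A_j)]$ of the ambient group $G$, and this data does not descend to $P/\gamma_c(P)$; likewise the coprime-action tools you invoke (Lemma \ref{FG1}(1) and Lemma \ref{[N,G]}) apply to quotients by $A$-invariant subgroups that are normal in $G$. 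Furthermore, your absorption step rests on the claim that each $[C_N(a),C_G(a)]\cap P$ lands inside one of the $P_l$; the true containment goes the other way --- the paper only observes that $P_l\le C_G(a)'\cap P$ for a suitable $a$ --- and $C_G(a)'\cap P$ is in general far larger than any single $P_l$. Finally, the commutator bookkeeping you defer (closure of the set $P_1\cdots P_r$ under multiplication) is not a technicality to be arranged afterwards: it is essentially the assertion of the theorem itself, so the argument is circular exactly where the real work lies. A genuine proof has to be taken from \cite{Sp}; it does not follow formally from Theorem \ref{T} and the lemmas quoted in this paper.
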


Using the routine inverse limit argument all the above results can be extended to the case where $G$ is a profinite group. In particular we have the following lemmas.

\begin{lemma}
\label{FG1profi} 
Let $A$ be a coprime group of automorphisms of the profinite group $G$. 
\begin{enumerate}
\item If $N$ is any  $A$-invariant normal closed subgroup of $G$, then \\$C_{G/N}(A)=C_G(A)N/N$;
\item If $H$ is any $A$-invariant pro-$p$ subgroup of $G$, then $H$ is contained in an $A$-invariant Sylow pro-$p$ subgroup of $G$.
\end{enumerate}
\end{lemma}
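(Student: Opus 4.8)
The plan is to reduce everything to the finite case via a routine inverse-limit argument, applying Lemma \ref{FG1} to the finite quotients of $G$ and then reassembling the conclusions by compactness. The one structural fact I would record at the outset is that the $A$-invariant open normal subgroups of $G$ form a base of neighbourhoods of the identity: given any open normal $U$, the subgroup $\bigcap_{a\in A}a(U)$ is again open normal, is $A$-invariant, and is contained in $U$. Moreover, since the coprime action presents $G$ as an inverse limit of finite groups of order prime to $|A|$, every finite quotient $G/L$ by an open normal subgroup has order prime to $|A|$ (it is a quotient of one of the defining finite groups). Consequently $A$ acts coprimely on each finite $G/L$, so that Lemma \ref{FG1} is available on every such quotient. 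The second ingredient is the standard compactness principle that an inverse limit of nonempty finite sets over a directed index set is nonempty.

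For part (1), the inclusion $C_G(A)N/N\subseteq C_{G/N}(A)$ is immediate, and $C_G(A)N$ is closed because it is the image of the compact set $C_G(A)\times N$ under multiplication. For the reverse inclusion I would fix $\bar x\in C_{G/N}(A)$ together with a lift $x\in G$, and for each $A$-invariant open normal $L$ introduce the finite set
$$
F_L=C_{G/L}(A)\cap \big(x(NL)/L\big),
$$
the $A$-fixed points of $G/L$ lying in the image of the coset $xN$. Applying Lemma \ref{FG1}(1) to the finite quotient $G/L$ with its $A$-invariant normal subgroup $NL/L$ shows that the image of $\bar x$ in $G/(NL)$, being $A$-fixed, lifts to an $A$-fixed element of $G/L$ inside that coset; hence each $F_L$ is nonempty. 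The projections $G/L\to G/L'$ for $L\subseteq L'$ carry $F_L$ into $F_{L'}$, so the $F_L$ form an inverse system of nonempty finite sets. A compatible family $(c_L)\in\varprojlim F_L$ then determines an element $y\in C_G(A)$ with $y\in xNL$ for every $L$; since $\bigcap_L NL=N$, this yields $y\in C_G(A)\cap xN$ and therefore $\bar x=yN\in C_G(A)N/N$.

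For part (2) I would argue in the same spirit. For each $A$-invariant open normal $L$ the image $HL/L$ is a finite $A$-invariant $p$-subgroup of $G/L$, so by Lemma \ref{FG1}(2) the set $\mathcal S_L$ of $A$-invariant Sylow $p$-subgroups of $G/L$ containing $HL/L$ is nonempty and finite. Since a surjection of finite groups sends a Sylow $p$-subgroup onto a Sylow $p$-subgroup and preserves $A$-invariance, the projections induce maps $\mathcal S_L\to\mathcal S_{L'}$, and a point of the (nonempty) inverse limit is a compatible family $(\bar P_L)$ with surjective transition maps. The subgroup $P=\varprojlim \bar P_L$ is then a closed $A$-invariant subgroup; it is pro-$p$ as an inverse limit of finite $p$-groups, its index is prime to $p$ because each $[G/L:\bar P_L]$ is, so $P$ is a Sylow pro-$p$ subgroup, and $H\subseteq\bigcap_L HL\subseteq\bigcap_L \hat P_L=P$, where $\hat P_L$ denotes the preimage of $\bar P_L$ in $G$.

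The arguments are genuinely routine, as the text anticipates; the only points demanding care are bookkeeping ones rather than conceptual obstacles. The main thing to watch is that the finite input really is available, i.e.\ that coprimeness descends to \emph{all} finite quotients $G/L$ and not merely to a chosen defining system, and that the object produced by compactness has exactly the required properties: in (1) that the assembled fixed point lands in the coset $xN$ (which is where $\bigcap_L NL=N$ enters), and in (2) that the chosen compatible family has surjective transition maps, so that its limit is genuinely pro-$p$ of index prime to $p$ rather than merely a closed $A$-invariant subgroup.
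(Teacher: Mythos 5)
Your proposal is correct and is exactly what the paper has in mind: the paper gives no written proof of Lemma \ref{FG1profi}, simply asserting that Lemma \ref{FG1} extends ``using the routine inverse limit argument,'' and your write-up carries out precisely that argument (cofinal system of $A$-invariant open normal subgroups, Lemma \ref{FG1} on each finite quotient, compactness of an inverse limit of nonempty finite sets), with the genuinely necessary bookkeeping --- coprimeness descending to all quotients $G/L$, the identity $\bigcap_L NL=N$ in part (1), and the index and pro-$p$ verifications for $P=\varprojlim \bar P_L$ in part (2) --- handled correctly.
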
 

\begin{lemma}
\label{FG2profi} 
Let $q$ be a prime, $G$ a profinite group coprimely acted on by an elementary abelian $q$-group $A$ of order $q^3$.\  Let $A_1, \dots,A_s$ be the maximal subgroups of $A$.\ If $H$ is an $A$-invariant subgroup of $G$ we have $H=\langle C_H(A_1),\dots,C_H(A_s)\rangle$.
\end{lemma}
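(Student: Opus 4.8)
The plan is to deduce the statement from its finite counterpart, Lemma \ref{FG2}, by a standard inverse limit argument, so that the bulk of the work is careful bookkeeping with centralizers modulo open normal subgroups. Write $K=\langle C_H(A_1),\dots,C_H(A_s)\rangle$ for the closed subgroup generated by the listed centralizers. Since each $C_H(A_i)\le H$ and $H$ is closed, we have $K\le H$, and the whole point is to establish the reverse inclusion $H\le K$.

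First I would fix a fundamental system of neighbourhoods of the identity consisting of $A$-invariant open normal subgroups of $G$. This is legitimate: given any open normal subgroup $M$, the finite intersection $N=\bigcap_{a\in A}M^{a}$ is again open and normal, and is now $A$-invariant, so such subgroups $N$ do form a base. For each such $N$ the quotient $G/N$ is a finite $q'$-group on which $A$ acts coprimely, and $HN/N$ is an $A$-invariant subgroup of it. Applying the finite Lemma \ref{FG2} to $G/N$ with the subgroup $HN/N$ yields
\[
HN/N=\big\langle C_{HN/N}(A_1),\dots,C_{HN/N}(A_s)\big\rangle .
\]

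The key step is to identify $C_{HN/N}(A_i)$ with the image of $C_H(A_i)$. Working inside the profinite group $HN$, which is $A$-invariant and has $N$ as an $A$-invariant open normal subgroup, part (1) of Lemma \ref{FG1profi} gives $C_{HN/N}(A_i)=C_{HN}(A_i)N/N$. A short additional argument then shows $C_{HN}(A_i)N=C_H(A_i)N$: if $x=hn\in C_{HN}(A_i)$ with $h\in H$ and $n\in N$, then the image of $h$ in $H/(H\cap N)\cong HN/N$ lies in $C_{H/(H\cap N)}(A_i)$, and applying Lemma \ref{FG1profi}(1) to $H$ with the $A$-invariant normal subgroup $H\cap N$ shows this image is represented by an element of $C_H(A_i)$; hence $h\in C_H(A_i)(H\cap N)\subseteq C_H(A_i)N$ and $x\in C_H(A_i)N$. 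Consequently $C_{HN/N}(A_i)=C_H(A_i)N/N$, and the displayed equality becomes $HN/N=KN/N$, that is, $HN=KN$.

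Finally I would pass to the limit. Since $HN=KN$ holds for every member $N$ of our base, we get $H\subseteq KN$ for all such $N$, whence $H\subseteq\bigcap_N KN$. As $K$ is closed and the $N$ form a fundamental system of neighbourhoods of the identity, $\bigcap_N KN=\overline{K}=K$, so $H\le K$ and therefore $H=K$, as required. The one genuinely delicate point is the centralizer identification $C_{HN/N}(A_i)=C_H(A_i)N/N$, which is precisely where coprimality enters through Lemma \ref{FG1profi}(1); everything else is the routine profinite limit machinery, and I expect no real obstacle beyond verifying that identification cleanly.
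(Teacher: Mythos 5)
Your proof is correct and is essentially the paper's own argument: the paper gives no separate proof of this lemma, merely stating that the finite case (Lemma \ref{FG2}) extends "by the routine inverse limit argument," which is exactly what you carry out. Your reduction modulo $A$-invariant open normal subgroups, the centralizer identification $C_{HN/N}(A_i)=C_H(A_i)N/N$ via Lemma \ref{FG1profi}(1), and the final intersection step $\bigcap_N KN=K$ are precisely the intended routine machinery, executed cleanly.
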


Many other facts on automorphisms of finite groups admit corresponding profinite versions. In particular, later on we will use profinite versions of the above results \ref{[N,G]}, \ref{T}, \ref{CO} and \ref{p1...pr}.

\section{Useful Lie-theoretic machinery}

Let $L$ be a Lie algebra over a field ${\mathfrak k}$. Let $k$ be a positive integer and let $x_1,x_2,\dots,x_k,x,y$ be elements of $L$. We define inductively 
$$[x_1]=x_1;\ [x_1,x_2,\dots,x_k]=[[x_1,x_2,\dots,x_{k-1}],x_k].$$
 An element $a\in L$ is called ad-nilpotent if there exists a positive integer $n$ such that 
 $$[x,\underset{n}{\underbrace{a,\ldots,a}}]=0 \quad \text{for all}\,  x\in L.$$ 
 If $n$ is the least integer with the above property then we say that $a$ is ad-nilpotent of index $n$. Let $X\subseteq L$ be any subset of $L$. By a commutator in elements of $X$ we mean any element of $L$ that can be obtained as a Lie product of elements of $X$ with some system of brackets. Denote by $F$ the free Lie algebra over 
${\mathfrak k}$ on countably many free generators $x_1,x_2,\dots$. Let $f=f(x_1,x_2,\dots,x_n)$ be a non-zero element of $F$. The algebra $L$ is said to satisfy the identity $f\equiv 0$ if $f(a_1,a_2,\dots,a_n)=0$ for any $a_1,a_2,\dots,a_n\in L$. In this case we say that $L$ is PI. Now we will quote a theorem of Zelmanov \cite{Z0} which has numerous important applications to group theory (in particular see \cite{OS} for examples where the theorem is used).

\begin{theorem}\label{liealgbnilp}
Let $L$ be a Lie algebra over a field ${\mathfrak k}$ generated by $a_1,a_2,\dots,a_m$.\  Assume that $L$ is PI and that each commutator in the generators $a_1,a_2,\dots,a_m$ is {\rm ad}-nilpotent.\ Then $L$ is nilpotent.
\end{theorem}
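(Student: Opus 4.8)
The plan is to follow Zelmanov's structure theory for PI Lie algebras, since this statement is genuinely deep and does not admit a short self-contained argument; its proof is part of the circle of ideas that resolved the restricted Burnside problem. First I would reduce to a convenient ground field. Nilpotency, the PI condition, and ad-nilpotency of the generators and of the commutators in them are all preserved when we extend scalars to the algebraic closure $\bar{\mathfrak k}$ (and nilpotency of $L\otimes\bar{\mathfrak k}$ reflects back to $L$ via the embedding $L\hookrightarrow L\otimes\bar{\mathfrak k}$), so I may assume ${\mathfrak k}$ is algebraically closed, hence infinite. Over an infinite field each multihomogeneous component of the defining identity $f\equiv 0$ is again an identity of $L$, and a nonzero identity linearizes to a multilinear one; thus I may assume $L$ satisfies a multilinear identity, which is what makes the subsequent linearization arguments available.

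The central notion is that of a \emph{sandwich}: an element $c\in L$ with $(\mathrm{ad}\,c)^2=0$ and $\mathrm{ad}\,c\,\mathrm{ad}\,b\,\mathrm{ad}\,c=0$ for all $b\in L$. The key external input I would invoke is the Kostrikin--Zelmanov theorem that a Lie algebra generated by finitely many sandwiches is nilpotent (with class bounded in terms of the number of generating sandwiches). The strategy is then to manufacture enough sandwiches inside $L$ and to show that the ideal they generate controls the whole algebra.

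Concretely, starting from an element $a$ that is ad-nilpotent of index $n$, I would use the multilinear identity together with repeated partial linearizations of the relation $(\mathrm{ad}\,a)^n=0$ to extract elements on which $\mathrm{ad}$ acts in the sandwich fashion of index $2$. The PI hypothesis is exactly what bounds the combinatorial complexity of these linearizations and guarantees that the process terminates with genuine sandwiches rather than merely low-index ad-nilpotent elements. Because $L$ is generated by the finitely many $a_1,\dots,a_m$ and every commutator in them is ad-nilpotent, the finiteness required to feed the sandwich-algebra theorem is available, and one then bootstraps: passing to the quotient by the sandwich ideal preserves the hypotheses while reducing complexity, and an inductive application of the sandwich nilpotency theorem collapses $L$.

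I expect the main obstacle to be precisely the passage from ad-nilpotent elements to sandwiches: controlling the linearizations so that the PI degree and the ad-nilpotency indices combine to yield true sandwiches is the least formulaic and technically hardest part, and it is the point at which Zelmanov's original work is indispensable. For the purposes of the present paper I would therefore invoke this result directly, citing \cite{Z0}, rather than reproduce its machinery.
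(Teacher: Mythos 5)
The paper offers no proof of this statement at all: it is quoted directly as a theorem of Zelmanov with the citation \cite{Z0}, which is precisely what you conclude one should do, so your approach coincides with the paper's. Your sketch of the underlying machinery (reduction to a multilinear identity over an infinite field, construction of sandwiches from ad-nilpotent elements via linearization, and the Kostrikin--Zelmanov nilpotency theorem for sandwich-generated algebras) is a faithful high-level outline of Zelmanov's actual argument, but, like the paper, you rightly invoke the result by citation rather than reproduce it.
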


The following theorem provides an important criterion for a Lie algebra to be PI. It was proved by Bakhturin and Zaicev for soluble groups $A$ \cite{BZ} and later extended by Linchenko to the general case \cite{LI}.

\begin{theorem}\label{PIandC} Assume that a finite group $A$ acts on a Lie algebra $L$ in such a manner that $C_L(A)$, the subalgebra formed by fixed elements, is PI. Assume further that the characteristic of the ground field of $L$ is either 0 or prime to the order of $A$. Then $L$ is PI.
\end{theorem}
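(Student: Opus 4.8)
The plan is to reduce the statement to a purely graded problem, then to an inductive argument, the only genuinely hard point being the removal of any solvability hypothesis on $A$.

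\textbf{Reduction to a large ground field.} First I would enlarge the ground field. Let $K$ be the algebraic closure of $\mathfrak{k}$, or any finite extension containing a primitive $|A|$-th root of unity. Because $|A|$ is invertible in $\mathfrak{k}$, the averaging operator $e=|A|^{-1}\sum_{a\in A}a$ is an idempotent endomorphism of $L$ with image $C_L(A)$, and it commutes with $-\otimes_{\mathfrak{k}}K$; hence $C_{L\otimes K}(A)=C_L(A)\otimes K$, which is still PI, as it satisfies the same multilinear identity. The characteristic hypothesis is unaffected. Moreover a multilinear Lie identity holds on $L$ if and only if the associated system of linear equations in its coefficients (read off from the structure constants) has a nonzero solution, and the solution space is compatible with field extension; so $L\otimes K$ being PI over $K$ forces $L$ to be PI over $\mathfrak{k}$. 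Thus I may assume $\mathfrak{k}=K$ is algebraically closed of the required characteristic.

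\textbf{The abelian case as a graded problem.} Suppose first that $A$ is abelian. Then $\mathfrak{k}A$ is semisimple by Maschke and split, so $L$ decomposes into weight spaces $L=\bigoplus_{\chi\in\hat A}L_\chi$, where $\hat A$ is the finite abelian group of characters $A\to\mathfrak{k}^\times$. Since $[L_\chi,L_\psi]\subseteq L_{\chi\psi}$, this is a grading of $L$ by $\hat A$ whose neutral component is precisely $L_1=C_L(A)$. I am therefore reduced to the following graded statement, which is the technical heart: if $L=\bigoplus_{g\in G}L_g$ is a Lie algebra graded by a finite group $G$ and the identity component $L_e$ is PI, then $L$ is PI. To prove this I would fix a multilinear identity $f(x_1,\dots,x_d)=0$ of $L_e$ and search for a multilinear identity of $L$; since a multilinear polynomial vanishes on $L$ as soon as it vanishes on all homogeneous substitutions, it suffices to control homogeneous tuples. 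Given homogeneous $a_1,\dots,a_n$ and a long left-normed product, the partial degrees $g_1\cdots g_i\in G$ take at most $|G|$ values, so by pigeonhole many consecutive segments have neutral total degree and hence lie in $L_e$; the identity $f$ then collapses these segments, and a symmetrization over the $S_n$-orbit of the product assembles a nonzero multilinear identity of $L$ of degree bounded in terms of $d$ and $|G|$.

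\textbf{From abelian to solvable $A$.} For solvable $A$ I would induct on $|A|$. Choose a proper normal subgroup $B\trianglelefteq A$ with $A/B$ abelian. Then $A/B$ acts on $C_L(B)$, and its fixed subalgebra is $C_{C_L(B)}(A/B)=C_L(A)$, which is PI by hypothesis; the abelian case applied to this action shows that $C_L(B)$ is PI. Now $B$ acts on $L$ with $C_L(B)$ PI and $|B|<|A|$, so the inductive hypothesis yields that $L$ is PI.

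\textbf{The main obstacle: arbitrary $A$.} The induction above breaks down exactly when $A$ has no proper normal subgroup with abelian quotient, that is, when $A$ is not solvable, and this is where the real difficulty lies. For non-abelian $A$ the weight-space decomposition is merely a module decomposition, not a grading by a group, so the graded argument does not apply verbatim. The way around it is to forget the grading and argue with the $A$-action itself: regard $L$ as a module algebra over the semisimple Hopf algebra $\mathfrak{k}A$, with $C_L(A)=L^{\mathfrak{k}A}$ its subalgebra of invariants, and establish the analogue of the graded PI theorem in this Hopf-algebraic setting. Concretely one replaces the bookkeeping of partial degrees by repeated application of the coproduct of $\mathfrak{k}A$, using semisimplicity to project long products into the invariant subalgebra where the identity of $C_L(A)$ can be invoked. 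Carrying out this coproduct combinatorics uniformly for all finite $A$, which is Linchenko's step, is the crux of the proof and the part I expect to be the most delicate.
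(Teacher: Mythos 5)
The first thing to note is that the paper contains no proof of this theorem at all: it is imported from the literature, with the soluble case credited to Bakhturin and Zaicev \cite{BZ} and the general case to Linchenko \cite{LI}. Your proposal accurately reconstructs the architecture of those sources, and the reductions you do carry out are correct: the scalar-extension step works because PI passes down via multilinearization (a multilinear identity of $L\otimes_{\mathfrak k}K$ decomposes over a $\mathfrak k$-basis of $K$ into multilinear identities of $L$, at least one nonzero) and passes up since multilinear identities survive extension; the identification of the abelian case with the graded statement (neutral component $L_1=C_L(A)$ of the $\hat A$-grading) is exactly right; and the induction from abelian to soluble via $B\trianglelefteq A$ with $A/B$ abelian, using $C_{C_L(B)}(A/B)=C_L(A)$, is precisely how \cite{BZ} reduces the soluble case to the graded theorem.

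Nevertheless, as a proof the proposal has a genuine gap — two, in fact, and you flag the second yourself. First, the graded core (if $L=\bigoplus_{g\in G}L_g$ with $G$ finite and $L_e$ PI, then $L$ is PI) is not established by the pigeonhole sketch: finding segments of neutral degree in a long left-normed product is easy, but since $f$ \emph{vanishes} on $L_e$ it does not ``collapse'' those segments to anything usable; the real difficulty is to manufacture a single nonzero multilinear polynomial such that \emph{every} homogeneous substitution can be reorganized to invoke the vanishing of $f$ on $L_e$, and verifying that the resulting symmetrized element is nonzero in the free Lie algebra is the combinatorial heart of \cite{BZ}, which your sketch does not supply. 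Second, and decisively, the non-soluble case is deferred wholesale to ``Linchenko's coproduct combinatorics'': that step is the entire content of \cite{LI}, and without it your induction proves the theorem only for soluble $A$. So what you have written is a well-informed and essentially accurate roadmap of how the theorem is proved in the papers this article cites, but it is not a self-contained proof; the two places where you stop are exactly the two places where the actual mathematical work resides.
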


The next lemma is taken from \cite{KS}. We will use it later when proving ad-nilpotency of some elements of a Lie algebra.

\begin{lemma}
\label{L14}
Suppose that $L$ is a Lie algebra, $H$ a subalgebra of $L$ gene\-ra\-ted by $r$ elements $h_1,\dots ,h_r$ such that all commutators in the $h_i$ are {\rm ad}-nilpotent in $L$. If $H$ is nilpotent, then for some number $v$ we have $[L,\underbrace{H,\dots,H}_{v}]=0$.
\end{lemma}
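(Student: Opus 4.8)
The plan is to translate the statement into a question about the associative algebra of adjoint operators and then prove nilpotency of that algebra by induction on the nilpotency class of $H$.

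First I would pass to the adjoint representation. Put $t_i=\mathrm{ad}_L(h_i)$ and let $\mathcal{A}\subseteq\mathrm{End}_{\mathfrak k}(L)$ be the associative subalgebra generated by $t_1,\dots,t_r$. Using $[\mathrm{ad}(x),\mathrm{ad}(y)]=\mathrm{ad}([x,y])$, the Lie subalgebra $T$ generated by the $t_i$ under the commutator bracket is exactly $\mathrm{ad}_L(H)$; since $H$ is nilpotent, so is $T$, and every Lie commutator in the $t_i$ is the adjoint of the corresponding commutator in the $h_i$, hence a nilpotent operator by hypothesis. Finally, unravelling the definitions, $[L,\underbrace{H,\dots,H}_{v}]$ is precisely the span of the images $x\cdot(s_1\cdots s_v)$ with $x\in L$ and $s_1,\dots,s_v\in\mathrm{ad}_L(H)\subseteq\mathcal{A}$, that is $L\cdot\mathcal{A}^{v}$. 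Thus it suffices to prove that $\mathcal{A}$ is nilpotent as an associative algebra and then take $v$ to be its nilpotency index.

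The core is therefore the following purely associative claim, which I would prove by induction on the nilpotency class $c$ of $T$: \emph{if an associative algebra $\mathcal{A}$ is generated by finitely many elements whose generated Lie algebra $T$ is nilpotent and all of whose Lie commutators in the generators are nilpotent elements of $\mathcal{A}$, then $\mathcal{A}$ is nilpotent.} For $c=1$ the generators commute (their brackets vanish) and are individually nilpotent, so a product of sufficiently many of them must repeat some generator beyond its nilpotency index and hence vanishes, giving $\mathcal{A}$ nilpotent. For the inductive step I would look at $T_c=\gamma_c(T)$, spanned by the weight-$c$ commutators in the generators. These are nilpotent operators by hypothesis, they commute pairwise (their mutual brackets are commutators of weight $2c$, hence lie in $\gamma_{2c}(T)\subseteq\gamma_{c+1}(T)=0$), and they are central in $\mathcal{A}$ (their brackets with the generators are weight-$(c+1)$ commutators, lying in $\gamma_{c+1}(T)=0$). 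Consequently the two-sided ideal $J=T_c\widetilde{\mathcal{A}}$ generated by $T_c$, where $\widetilde{\mathcal{A}}$ is the unital hull, satisfies $J^{m}=T_c^{m}\widetilde{\mathcal{A}}$, and the commuting-nilpotents argument again yields $T_c^{M}=0$ for some $M$, whence $J^{M}=0$. Modulo $J$ the images of the generators span a Lie algebra of class at most $c-1$ whose basic commutators remain nilpotent, so by induction $\mathcal{A}/J$ is nilpotent, say $\mathcal{A}^{w}\subseteq J$; then $\mathcal{A}^{wM}=(\mathcal{A}^{w})^{M}\subseteq J^{M}=0$.

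The main obstacle to watch is that the hypothesis only provides ad-nilpotency of \emph{commutators} in the $h_i$, not of arbitrary elements or linear combinations of $H$, so a naive ``sum of nilpotents'' argument is unavailable. The induction is arranged precisely to sidestep this: at every stage one only needs nilpotency of the pairwise-commuting spanning commutators of a single weight, for which the elementary fact that finitely many commuting nilpotent operators generate a nilpotent associative algebra suffices. The two technical points that make the induction run are verifying that $\gamma_c(T)$ is genuinely central in $\mathcal{A}$ (so that $J$ is two-sided with the clean power formula $J^{m}=T_c^{m}\widetilde{\mathcal{A}}$) and checking that passing to $\mathcal{A}/J$ preserves both the finite generation and the nilpotency of the basic commutators, so that the induction hypothesis applies to a Lie algebra of strictly smaller class.
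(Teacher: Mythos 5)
Your proof is correct. Note first that the paper itself contains no proof of this lemma: it is quoted verbatim from \cite{KS}, where it appears in a quantitative form (the number $v$ is bounded in terms of $r$, the nilpotency class $c$ of $H$, and the ad-nilpotency indices) and is proved by an induction on $c$ carried out via a collecting/reordering process on products of the operators $\mathrm{ad}\,u$ for spanning commutators $u$ of $H$, using that errors in the reordering are commutators of strictly larger weight and hence eventually vanish. Your argument reaches the same conclusion by a structurally cleaner route: you isolate the purely associative statement that $\mathcal{A}=\langle \mathrm{ad}\,h_1,\dots,\mathrm{ad}\,h_r\rangle$ is nilpotent, and induct on $c$ by factoring out the ideal $J=\gamma_c(T)\widetilde{\mathcal{A}}$. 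The key verifications all check out: $\gamma_c(T)$ is spanned by the finitely many weight-$c$ commutators in the generators (higher weights vanish), these are nilpotent by hypothesis, pairwise commuting since $\gamma_{2c}(T)=0$, and central in $\mathcal{A}$ since $[\gamma_c(T),T]=0$ and commuting with the generators of an associative algebra implies commuting with everything; this gives $J^M=\gamma_c(T)^M\widetilde{\mathcal{A}}=0$ by the commuting-nilpotents argument, and the quotient $\mathcal{A}/J$ inherits all hypotheses with class at most $c-1$. You are also right to flag, and correctly avoid, the trap that only \emph{commutators} in the $h_i$ are assumed ad-nilpotent, not arbitrary elements of $H$. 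One small bonus of your approach worth noting: tracking the indices through your induction recovers the explicit bound on $v$ asserted in \cite{KS}, even though the present paper only needs the qualitative statement.
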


Now we turn to groups and  for the rest of this section $p$ will denote a fixed prime number. 
Let $G$ be any group. A series of subgroups 
\begin{equation*}
(*)\quad \quad  G=G_{1}\geq G_{2}\geq \cdots
\end{equation*}
 is called an $N_{p}$-series if $[G_{i},G_{j}]\leq G_{i+j}$ and $G_{i}^{p}\leq G_{pi}$ for all $i,j$. With any $N_{p}$-series $(*)$ of  $G$ one can associate a Lie algebra $L^{*}(G)=\oplus L^{*}_{i}$ over  the field with $p$ elements $\F_{p}$, where we view each  $L^{*}_{i}=G_{i}/G_{i+1}$ as a linear space over $\F_{p}$. If $x \in G$, let $i=i(x)$ be the largest integer such that $x \in G_i$. We denote by $x^*$ the element $xG_{i+1}$ of $L^{*}(G)$. 

\begin{lemma}[Lazard, \cite{L}]
\label{Laz} 
For any $x\in G$ we have $(ad\, x^*)^p=ad\, (x^p)^*$. Consequently, if $x$ is of finite order, then $x^*$ is {\rm ad}-nilpotent.
\end{lemma}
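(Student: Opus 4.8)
The statement is an identity of linear operators on $L^*(G)$, so it suffices to evaluate both sides on an arbitrary homogeneous element $y^*$ with $y\in G_j$, and to show $(ad\,x^*)^p(y^*)=ad\,(x^p)^*(y^*)$ for $x\in G_i$. My plan is to transport the question into an associative $\F_p$-algebra, where its analogue is the classical Jacobson formula and no sign bookkeeping is needed. First I would pass to the free group $F=\langle a,b\rangle$, equipped with the minimal $N_p$-series $(F_n)$ obtained by placing $a$ in degree $i$ and $b$ in degree $j$ (the weighted Zassenhaus series). The homomorphism $\phi\colon F\to G$, $a\mapsto x$, $b\mapsto y$, carries this series into $(G_n)$: every iterated commutator and $p$-th power of $a,b$ of weight $\geq n$ is sent to an element of $G_n$ by the $N_p$-axioms $[G_i,G_j]\le G_{i+j}$ and $G_i^p\le G_{pi}$. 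Hence $\phi$ induces a graded Lie-algebra homomorphism $\phi_*\colon L^*(F)\to L^*(G)$ with $a^*\mapsto x^*$, $b^*\mapsto y^*$ and $(a^p)^*\mapsto (x^p)^*$. It is therefore enough to prove $(ad\,a^*)^p(b^*)=ad\,(a^p)^*(b^*)$ in $L^*(F)$ and apply $\phi_*$.

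For the free group the minimal $N_p$-series is the mod-$p$ dimension series, and the Jennings--Zassenhaus theorem embeds $L^*(F)$ as a (restricted) Lie subalgebra of the associated graded ring $S=\mathrm{gr}(\F_p[F])$ of the group algebra with respect to its filtration by the weighted powers of the augmentation ideal; here $S$ is a free associative $\F_p$-algebra, the Lie bracket is the associative commutator, and $g^*$ corresponds to the class of $g-1$. Two formal facts then close the free case. The first is the exact identity $g^p-1=(g-1)^p$ in $\F_p[F]$ (the cross-terms carry the coefficients $\binom{p}{k}\equiv 0\pmod p$), which shows that the associative $p$-th power of $a^*$ in $S$ equals $(a^p)^*$. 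The second is Jacobson's formula $(\mathrm{ad}\,u)^p=\mathrm{ad}\,(u^p)$, valid in every associative $\F_p$-algebra because $\mathrm{ad}\,u=\lambda_u-\rho_u$ is a difference of commuting operators and $p$-th powers are additive on commuting elements in characteristic $p$. Combining them in $S$ gives $(ad\,a^*)^p=ad\,((a^*)^p)=ad\,((a^p)^*)$; restricting these operators to the subalgebra $L^*(F)$ and pushing forward along $\phi_*$ yields $(ad\,x^*)^p(y^*)=ad\,(x^p)^*(y^*)$. As $y^*$ ranges over homogeneous elements, the operator identity on $L^*(G)$ follows.

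For the concluding assertion I would iterate: applying the identity to $x^p$ and using $(ad\,(x^p)^*)^p=((ad\,x^*)^p)^p$ gives, by induction, $(ad\,x^*)^{p^k}=ad\,(x^{p^k})^*$ for all $k$ (if at some stage $x^{p^k}\in G_{p^k i+1}$ the corresponding class is $0$ and $x^*$ is already ad-nilpotent). Now suppose $x$ has finite order and write $|x|=p^a m$ with $(m,p)=1$. Then $x^m$ is a $p$-element with $(x^m)^{p^a}=1$, so $(ad\,(x^m)^*)^{p^a}=ad\,((x^m)^{p^a})^*=ad\,(1^*)=0$. Since $G_i/G_{i+1}$ is an $\F_p$-space we have $(x^m)^*=m\,x^*$, whence $ad\,(x^m)^*=m\cdot ad\,x^*$, and as $m$ is invertible modulo $p$ this forces $(ad\,x^*)^{p^a}=0$; thus $x^*$ is ad-nilpotent.

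The main obstacle is the embedding step: making precise that for the free group the minimal $N_p$-series is the mod-$p$ dimension series and that $g^*\mapsto g-1$ defines an \emph{injective}, grading- and $p$-operation-preserving map of $L^*(F)$ into $\mathrm{gr}(\F_p[F])$. This is exactly where the restricted-Lie structure enters and is what allows the purely formal Jacobson computation to be transported back to $L^*(G)$; for a general $N_p$-series the corresponding map need not be injective, which is why the reduction to $F$ is essential. A more self-contained alternative avoids the group algebra and expands $[x^p,y]=\prod_{k=0}^{p-1}[x,y]^{x^{k}}$ by Philip Hall's collection formula, where the surviving commutator $[x,y,\underbrace{x,\dots,x}_{p-1}]$ acquires the coefficient $\sum_{k}\binom{k}{m}=\binom{p}{p}=1$ while every other contribution has exponent divisible by $p$ and is killed modulo $G_{pi+j+1}$ using $G_n^p\le G_{pn}$; however, controlling the higher collection terms makes this route considerably more delicate.
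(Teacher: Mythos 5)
The paper offers no proof of this lemma: it is quoted verbatim from Lazard \cite{L}, so there is no internal argument to compare yours against, and what you have written is essentially a reconstruction of the classical Lazard--Jennings--Quillen proof. It is correct. The reduction to the free group $F=\langle a,b\rangle$ with the weighted Zassenhaus series is sound: that series is the minimal $N_p$-series with $a$ of weight $i$ and $b$ of weight $j$, so $\phi(F_n)\leq G_n$ follows directly from the two $N_p$-axioms, and the induced graded map $\phi_*$ reduces the operator identity to $L^*(F)$. The two formal inputs are exact as you state them: $(g-1)^p=g^p-1$ holds identically in $\F_p[F]$ because $g$ commutes with $1$ and the middle binomial coefficients vanish modulo $p$ (the sign also works for $p=2$), and Jacobson's identity $(\lambda_u-\rho_u)^p=\lambda_{u^p}-\rho_{u^p}$ holds because left and right multiplications commute. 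Your treatment of the ``consequently'' clause is careful on exactly the two points where care is genuinely needed: reading $(x^{p^k})^*$ in the expected degree $p^k i$, with the convention that it is $0$ when $x^{p^k}$ lies deeper (this is the correct reading of the loosely stated lemma, since homogeneous operators of different degrees can only agree by both vanishing), and passing to $x^m$ with $(x^m)^*=m\,x^*$ and $m$ invertible modulo $p$ --- a step that is necessary, because iterating on $x$ alone never reaches the identity when $m>1$. The one non-self-contained step is the one you flag yourself: the injectivity of $g^*\mapsto \overline{g-1}$, i.e.\ the weighted dimension-subgroup theorem for free groups identifying the minimal $N_p$-series with the mod-$p$ dimension series and $\mathrm{gr}\,\F_p[F]$ with a free associative algebra. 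That is a substantial classical theorem, arguably heavier than the lemma itself, but it is precisely what the cited source \cite{L} establishes, so invoking it is consonant with the paper's own practice of citing the lemma without proof. Two marginal remarks: in your sketched Hall-collection alternative the surviving coefficient should come from $\sum_{k=0}^{p-1}\binom{k}{p-1}=\binom{p}{p}=1$ (your $\binom{k}{m}$ has an undefined $m$), though this aside does not affect the main proof; and the endgame can be shortened by observing that the $p'$-element $x^{p^a}$ has trivial class in every quotient $G_l/G_{l+1}$, since that class is killed by both $m$ and $p$, whence $(\mathrm{ad}\,x^*)^{p^a}=0$ directly.
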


Let $w=w(x_1,x_2,\dots,x_n)$ be non-trivial group-word, i.e.\ a non-trivial element of the free group on free generators $x_1,x_2,\dots,x_n$. Suppose that the group $G$ has a subgroup $H$ and elements $g_1,g_2,\dots,g_n$ such that $w(g_1h_1,\dots,g_nh_n)=1$ for all $h_1,h_2,\dots,h_n\in H$. In this case we say that the law $w\equiv 1$ is satisfied on the cosets $g_1H,\dots,g_nH$ and the group $G$ satisfies a coset identity. The next proposition follows from the proof of the main theorem in the paper of Wilson and Zelmanov \cite{WZ}. 

\begin{proposition}
\label{WilZel}
Let $G$ be a group satisfying a coset identity. Then there exists a non-zero multilinear Lie polynomial $f$ over $\F_{p}$ such that  for any  $N_{p}$-series  $(*)$ of $G$ the corresponding algebra  $L^{*}(G)$ satisfies the identity $f\equiv 0$.
\end{proposition}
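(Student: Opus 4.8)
The plan is to reduce the statement to the main theorem of Wilson and Zelmanov \cite{WZ}, observing that their argument is driven only by the two defining relations of an $N_p$-series and is therefore insensitive to the particular series chosen. The starting point is the standard dictionary between a word and a Lie element: working in the free associative $\F_p$-algebra on variables $\xi_1,\dots,\xi_n$ and using the Magnus correspondence, a non-trivial word $w=w(x_1,\dots,x_n)$ acquires an expansion whose lowest non-zero homogeneous component is a non-zero element of the free restricted Lie algebra. This leading Lie form, together with its degree $d$, depends only on $w$ and on $p$, and it is the object from which the desired polynomial $f$ will be manufactured.

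To incorporate the coset hypothesis I would pass to the group algebra $\F_p G$, filtered by the ideals attached to the given $N_p$-series $(*)$; by the theorem of Jennings--Lazard--Quillen its associated graded algebra is (a homomorphic image of) the restricted enveloping algebra of $L^*(G)$, so that multiplication and $p$-th powers in $G$ induce precisely the bracket and the restricted power map on $L^*(G)$. Substituting $x_i\mapsto g_ih_i$ and expanding in this filtered algebra, the fixed elements $g_1,\dots,g_n$ contribute fixed invertible factors while $h_1,\dots,h_n\in H$ furnish the variable part. The coset identity $w(g_1h_1,\dots,g_nh_n)=1$ forces every graded component of the expansion to vanish identically in the $h_i$; reading off the lowest component in which the dependence on the $h_i$ is genuine, and then passing first to a multihomogeneous and afterwards to a fully multilinear piece, one extracts from the leading Lie form a non-zero multilinear Lie polynomial $f$ over $\F_p$ which is satisfied by $L^*(G)$. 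Since $f$ is assembled solely from the leading form of $w$, it depends on nothing but $w$ and $p$.

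The transfer to an arbitrary $N_p$-series is then automatic: the computation above uses only the inclusions $[G_i,G_j]\le G_{i+j}$ and $G_i^p\le G_{pi}$ of the filtration, which are exactly the $N_p$-axioms, so the same $f$ works for every such series. The step I expect to be the main obstacle is the linearization in characteristic $p$, where naive multilinearization can annihilate a Lie polynomial; the delicate point is to guarantee that the component extracted from the leading form of $w$ survives as a genuinely non-zero multilinear element. This is precisely where I would lean on \cite{WZ}: non-vanishing is certified inside the restricted enveloping algebra by means of a Poincar\'e--Birkhoff--Witt basis before the identity is read back as a Lie polynomial, and the same analysis supplies the passage from the coset identity on $H$ to an honest identity on $L^*(G)$.
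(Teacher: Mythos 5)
Your proposal takes essentially the same route as the paper: the paper offers no independent argument for this proposition but simply quotes it as following from the proof of the main theorem of Wilson and Zelmanov \cite{WZ}, and your sketch is a faithful outline of exactly that argument (filtration of $\F_p G$ induced by the $N_p$-series, identification of the associated graded algebra with a restricted enveloping algebra, extraction and multilinearization of the leading Lie form of $w$, with the uniformity over all $N_p$-series coming from the fact that only the $N_p$-axioms are used), deferring the same two delicate points---non-vanishing of the multilinearized leading form in characteristic $p$, and the passage from substitutions ranging over $H$ to an identity on all of $L^{*}(G)$---to \cite{WZ}. One caveat worth recording: that second passage genuinely requires $H$ to be of finite index (open, in the profinite applications), since for $H$ trivial the ``coset identity'' hypothesis is vacuous while $L^{*}(G)$ of a free group is not PI; this hypothesis is implicit in all of the paper's uses of the proposition but appears neither in its statement nor in your sketch, so it should be made explicit wherever the extraction step is claimed to yield an identity valid on the whole of $L^{*}(G)$.
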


In general a group  $G$ has many $N_{p}$-series; one of the most important is the so called Jennings-Lazard-Zassenhaus series that can be defined as follows.   

Let $\gamma_j(G)$ denote the $j$th term of the lower central series of $G$.
Set $D_i=D_i(G)= \prod_{jp^{k}\geq i}\gamma_{j}(G)^{p^{k}}$. The subgroup $D_{i}$ is also known as  the $i$th-dimension subgroup of $G$ in characteristic $p$. These subgroups form an $N_{p}$-series of $G$ known as the  Jennings-Lazard-Zassenhaus series. Let $L_{i}=D_{i}/D_{i+1}$ and $L(G)=\oplus L_{i}$.  Then $L(G)$  is a Lie algebra over the field $\F_p$ (see \cite{GA}, Chapter 11 for more detail). The subalgebra of $L(G)$ generated by $L_{1}=D_1/D_2$ will be denoted by $L_p(G)$. The next theorem is due to Lazard \cite{L}.
\begin{lemma}
\label{PA}
Let $G$ be a finitely generated pro-$p$ group. If $L_p(G)$ is nilpotent, then $G$ is a $p$-adic analytic group.
\end{lemma}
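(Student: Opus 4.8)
The plan is to translate the nilpotency of $L_p(G)$ into a polynomial bound on the orders of the quotients $G/D_i$, and then to invoke Lazard's characterisation of $p$-adic analytic groups as precisely those finitely generated pro-$p$ groups whose dimension-subgroup quotients grow polynomially (equivalently, those of finite rank). First I would record the elementary reductions. As $G$ is finitely generated, $L_1=D_1/D_2$ is a finite-dimensional $\F_p$-vector space, and by definition $L_p(G)$ is the Lie subalgebra it generates. A finitely generated nilpotent Lie algebra is finite-dimensional, so the hypothesis yields $\dim_{\F_p}L_p(G)=D<\infty$; moreover $L_p(G)$ is graded and its homogeneous component $L_p(G)_j$ in degree $j$ vanishes once $j$ exceeds its nilpotency class $c$.

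The decisive point is that the full graded algebra $L(G)=\oplus_i L_i$ is controlled by $L_p(G)$ together with the restriction ($p$-power) maps $L_m\to L_{pm}$ coming from the relations $D_m^p\le D_{pm}$. Indeed, the defining formula $D_i=\prod_{jp^k\ge i}\gamma_j(G)^{p^k}$ shows that each $L_i=D_i/D_{i+1}$ is spanned by the $k$-fold restriction images of the homogeneous components $L_p(G)_j$ with $jp^k=i$. Consequently $L_i\ne 0$ forces $i=jp^k$ with $j\le c$, so the nonzero components of $L(G)$ are supported on the sparse set $\{jp^k:1\le j\le c,\ k\ge 0\}$, and every $a_i:=\dim_{\F_p}L_i$ is at most $D$.

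These two facts give the required growth estimate directly. Below any bound $N$ there are at most $c(\log_p N+1)$ indices $i<N$ with $a_i\ne 0$, each contributing at most $D$, so
\[
\log_p|G/D_N|=\sum_{i<N}a_i\le Dc(\log_p N+1)=O(\log N),
\]
whence $|G/D_N|$ is bounded by a fixed power of $N$. By Lazard's criterion this polynomial growth of the dimension-subgroup quotients is equivalent to $G$ being $p$-adic analytic, and the proof concludes. Alternatively one may package the same information as the statement that the associated graded ring $\mathrm{gr}\,\F_p[[G]]=u(L(G))$ has a Hilbert series $\prod_i(1+t^i+\dots+t^{(p-1)i})^{a_i}$ with a pole of finite order at $t=1$, the order being the analytic dimension of $G$.

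The step I expect to be most delicate is the structural analysis of $L(G)$ in the second paragraph: separating the ordinary bracket operation, in which $L_p(G)$ lives and is nilpotent, from the restriction operation, which on its own already makes $L(G)$ infinite-dimensional (already for $G=\mathbb{Z}_p$). One must verify carefully that the iterated $p$-power maps produce no homogeneous components outside the sparse support dictated by $L_p(G)$. This is exactly where nilpotency, rather than the weaker condition $\sup_i a_i<\infty$, is essential: bounded width alone is shared by non-analytic groups such as the Nottingham group, for which $\log_p|G/D_i|$ grows linearly in $i$; it is the vanishing of $L_p(G)_j$ for $j>c$ that makes the support sparse enough to force logarithmic growth and hence analyticity.
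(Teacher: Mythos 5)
The paper does not actually prove this lemma: it is quoted as a theorem of Lazard with a bare citation of \cite{L}, so there is no internal argument for your proposal to be matched against, and you should judge your write-up as a reconstruction of Lazard's theory. Taken on those terms it is correct in substance, with two caveats worth flagging. First, the structural claim of your second paragraph is not a formal consequence of the defining formula $D_i=\prod_{jp^k\geq i}\gamma_j(G)^{p^k}$ alone: to see that the image of $\gamma_j(G)^{p^k}$ in $L_{jp^k}$ consists of $k$-fold $p$-power images of $L_p(G)_j$, you need (i) that $\gamma_j(G)$ maps into the span of weight-$j$ brackets in $L_1$, (ii) Lazard's compatibility $(x^p)^*=(x^*)^{[p]}$, which is strictly stronger than the ad-version quoted in the paper as Lemma~\ref{Laz}, and (iii) Jacobson's formula, to check that the span of $p^k$-th powers of elements of $L_p(G)$ is closed under brackets and the $[p]$-map despite the non-additivity of the $p$-operation (this last point also underlies your bound $a_i\le D$, since the $p$-power map is not linear and the span of its image must be reduced to basis vectors by exactly this device). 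All of this is the Jennings--Lazard theorem that $L(G)$ is generated by $L_1$ as a restricted Lie algebra, provable from the identity $D_i=[D_{i-1},G]D_{\lceil i/p\rceil}^p$; it is standard and citable, but it should be cited as such rather than read off the product formula, and you yourself correctly identify this as the delicate step. Second, once that is in place you have established something sharper than the growth estimate: $L(G)$ is spanned by the iterated $p$-power images of the finite-dimensional algebra $L_p(G)$, i.e.\ $\mathrm{gr}(G)$ is finitely generated under the $p$-power operation, and that is precisely the form in which Lazard's criterion is most commonly quoted; invoking it directly makes your counting paragraph unnecessary. The polynomial-growth characterisation you appeal to instead is also a genuine form of Lazard's theorem, but the direction you need -- that polynomial growth of $|G:D_N|$ forces analyticity -- is its harder half, so since your argument already delivers the module-theoretic statement, it is both shorter and safer to conclude from that. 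Your closing discussion of why nilpotency rather than bounded width is essential, with the Nottingham group as the witness, is accurate and correctly isolates sparseness of the support of $L(G)$, not boundedness of the $a_i$, as the operative point.
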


Every subspace (or just an element) of $L(G)$ that is contained in $D_i/D_{i+1}$ for some $i$ will be called homogeneous. Given a  subgroup $H$ of  the group $G$, we denote by $L(G,H)$ the linear span in $L(G)$ of all homogeneous elements of the form $hD_{i+1}$, where $h\in D_{i}\cap H$.\ Clearly, $L(G,H)$ is always a subalgebra of $L(G)$. Moreover, it is isomorphic with the Lie algebra associated with $H$ using the $N_{p}$-series of $H$ formed by $H_{i}=D_{i}\cap H$.  
We also set $L_{p}(G,H)=L_{p}(G)\cap L(G,H)$. 

\begin{lemma}
\label{L(GH)}
Suppose that any Lie commutator in homogeneous elements $x_{1},\ldots,x_{r}$ of $L(G)$ is {\rm ad}-nilpotent.\ Let $K=\langle x_{1},\ldots,x_{r}  \rangle$ and assume that $K\leq L(G,H)$ for some subgroup  $H$ of $G$ satisfying a coset identity. Then there exists some number $v$ such that:
$$[L(G),\underset{v}{\underbrace{K,\ldots,K}}]=0.$$  

\end{lemma}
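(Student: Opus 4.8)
The plan is to combine the polynomial identity produced by the coset law with Zelmanov's nilpotency criterion, and then to transport the resulting nilpotency of $K$ to all of $L(G)$ by means of Lemma \ref{L14}.

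First I would extract a polynomial identity on $L(G,H)$. Since $H$ satisfies a coset identity, I would apply Proposition \ref{WilZel} to the group $H$ equipped with the $N_{p}$-series $H_{i}=D_{i}(G)\cap H$. As recalled just before the statement, the Lie algebra associated with $H$ through this particular series is precisely $L(G,H)$, so the proposition yields a non-zero multilinear Lie polynomial $f$ over $\F_{p}$ such that $L(G,H)$ satisfies $f\equiv 0$. By hypothesis $K\leq L(G,H)$, and a subalgebra inherits every identity of the ambient algebra, so $K$ satisfies $f\equiv 0$; that is, $K$ is PI.

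Next I would prove that $K$ is nilpotent. The generators $x_{1},\ldots,x_{r}$ have the property that every Lie commutator in them is ad-nilpotent in $L(G)$; since $K$ is a subalgebra of $L(G)$, each such commutator is \emph{a fortiori} ad-nilpotent when viewed inside $K$. Because $K=\langle x_{1},\ldots,x_{r}\rangle$ is finitely generated, is PI, and has all commutators in its generators ad-nilpotent, Theorem \ref{liealgbnilp} of Zelmanov applies and gives that $K$ is nilpotent. Finally I would invoke Lemma \ref{L14} with $L=L(G)$ and the subalgebra $K=\langle x_{1},\ldots,x_{r}\rangle$: its generators' commutators are ad-nilpotent in $L(G)$ and $K$ is now known to be nilpotent, so the lemma produces a number $v$ with $[L(G),\underset{v}{\underbrace{K,\ldots,K}}]=0$, which is exactly the desired conclusion.

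The main obstacle is the first step: one must recognize that the coset identity satisfied by the \emph{group} $H$ is precisely the input needed to manufacture a \emph{Lie} polynomial identity on $L(G,H)$ through Wilson--Zelmanov, and that the series $H_{i}=D_{i}(G)\cap H$ is the correct choice realizing $L(G,H)$ as the Lie algebra associated with $H$. Once this PI is in hand, the remaining steps are a direct assembly of Zelmanov's theorem with Lemma \ref{L14}, both quoted above, and require no further calculation.
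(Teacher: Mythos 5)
Your proposal is correct and follows exactly the paper's own argument: Wilson--Zelmanov (Proposition \ref{WilZel}) applied to $H$ with the series $H_i=D_i\cap H$ makes $L(G,H)\supseteq K$ PI, Zelmanov's theorem (Theorem \ref{liealgbnilp}) then gives nilpotency of $K$, and Lemma \ref{L14} converts that into the vanishing of $[L(G),\underset{v}{\underbrace{K,\ldots,K}}]$. The paper's proof is just a terser version of the same three steps, so there is nothing to add.
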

\begin{proof}
In view of Lemma \ref{L14} it is sufficient to show that $K$ is nilpotent.\ From Proposition \ref{WilZel} it follows that $K$ satisfies a multilinear polynomial identity.  Thus, by Theorem \ref{liealgbnilp} $K$ is nilpotent. 
\end{proof}

Lemma \ref{FG1profi}(1) has important implications in the context of associated Lie algebras and their automorphisms. Let $G$ be a pro-$p$ group with a coprime automorphism $a$. Obviously $a$ induces an automorphism of every quotient $D_i/D_{i+1}$. This action extends to the direct sum $\oplus D_i/D_{i+ 1}$. Thus, $a$ can be viewed as an automorphism of $L(G)$ (or of $L_p(G)$).  Set $C_i=D_i \cap C_G(a)$.\ Then Lemma \ref{FG1profi}(1) shows that 
\begin{equation}
\label{LandCand}
C_{L(G)}(a)=\oplus C_iD_{i+1}/D_{i + 1}.
\end{equation}
This implies that the properties of $C_{L(G)}(a)$ are very much related to those of $C_G(a)$ and it follows that 
\begin{equation}
\label{CLp}
L_{p}(G,C_{G}(a))=C_{L_{p}(G)}(a).
\end{equation}
In particular, Proposition \ref{WilZel} shows that if $C_G(a)$ has a certain coset identity, then $C_{L(G)}(a)$ is PI. 

\begin{lemma}
\label{periodicandL(G)PI}
Let $a$ be a coprime automorphism of a pro-$p$ group $G$. If $C_G(a)'$ is periodic, then $C_{L(G)}(a)$ satisfies a multilinear polynomial identity. 
\end{lemma}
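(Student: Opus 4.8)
The plan is to reduce the statement to the existence of a coset identity on $C_G(a)$ and then invoke the machinery already assembled, namely Proposition \ref{WilZel} together with equation \eqref{LandCand}. Recall that the remark following Proposition \ref{WilZel} already tells us that once $C_G(a)$ satisfies a coset identity, $C_{L(G)}(a)$ is PI: indeed, by \eqref{LandCand} the fixed subalgebra $C_{L(G)}(a)=\oplus C_iD_{i+1}/D_{i+1}$ is exactly $L(G,C_G(a))$, the Lie algebra associated to $C:=C_G(a)$ via the $N_p$-series $H_i=D_i(G)\cap C$, so a coset identity on $C$ forces, through Proposition \ref{WilZel}, a non-zero multilinear Lie polynomial $f$ over $\F_p$ to vanish on $C_{L(G)}(a)$. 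Everything therefore comes down to producing a coset identity on $C$ out of the periodicity of $C'$.

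The key step — and the one I expect to be the main obstacle, since we are given only periodicity and not a bound on exponents — is a Baire category argument. I would write $C'=C_G(a)'$ as the countable union $C'=\bigcup_{n\ge 1}X_n$, where $X_n=\{g\in C' : g^n=1\}$. Since $g\mapsto g^n$ is continuous, each $X_n$ is closed; and $C'$, being a closed subgroup of the pro-$p$ group $G$, is itself profinite and hence a Baire space. By the Baire category theorem some $X_N$ has nonempty interior in $C'$, so there exist an element $c\in X_N$ and an open subgroup $W$ of $C'$ with $cW\subseteq X_N$; equivalently, $(cw)^N=1$ for every $w\in W$. The point of this step is that it extracts a law valid on an entire coset without ever needing $C'$ to have finite exponent.

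Finally I would read this as a single-variable coset identity on $C$. Both $c$ and $W$ lie inside $C'\le C$, so the law $x^N\equiv 1$ holds on the coset $cW$; as $x^N$ is a non-trivial element of the free group, this is a genuine coset identity satisfied by $C$ in the sense preceding Proposition \ref{WilZel}. Applying that proposition to $C$ yields a non-zero multilinear Lie polynomial $f$ over $\F_p$ such that $L^*(C)$ satisfies $f\equiv 0$ for every $N_p$-series of $C$. Specialising to the $N_p$-series $H_i=D_i(G)\cap C$ and using \eqref{LandCand}, the associated algebra $L^*(C)$ is precisely $C_{L(G)}(a)$; hence $C_{L(G)}(a)$ satisfies the multilinear polynomial identity $f\equiv 0$, which is exactly what is claimed.
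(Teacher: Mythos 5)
There is a genuine gap, and it sits exactly at the step you flagged as the crux: your Baire category argument is run inside the wrong group. You decompose $C'=C_G(a)'$ as $\bigcup_{n}X_n$ with $X_n=\{g\in C' : g^n=1\}$ and obtain a law $x^N\equiv 1$ on a coset $cW$ of a subgroup $W$ that is open in $C'$. But $C'$ can have infinite index in $C=C_G(a)$ --- indeed $C'$ can be trivial while $C$ is infinite (the hypothesis certainly allows $C$ to be abelian) --- so $W$ need not be open, nor even of finite index, in $C$. The definition of coset identity given before Proposition \ref{WilZel} is phrased loosely, but the Wilson--Zelmanov theorem from which that proposition is extracted concerns laws on cosets of an \emph{open} subgroup, and the proposition becomes false if arbitrary subgroups are allowed: taking $H=1$, any group with an element of order $p$ would ``satisfy a coset identity'' ($x^p\equiv 1$ on a singleton coset), yet for $G=\langle t\rangle\times F$ with $t$ of order $p$ and $F$ a non-abelian free pro-$p$ group, $L(G)$ contains a free restricted Lie algebra and is not PI. So your final step --- applying Proposition \ref{WilZel} to $C$ on the strength of a law that lives on a coset of a subgroup open only in $C'$ --- does not go through; in the degenerate case $C'=1$ your law carries no information about $C$ whatsoever, while the conclusion about $C_{L(G)}(a)$ still has to be proved.

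The paper avoids this trap by running Baire on $C\times C$ rather than on $C'$: it sets $X_{i}=\{ (x,y)\in C\times C \mid [x,y]^{p^{i}}=1 \}$, which cover $C\times C$ precisely because $C'$ is periodic (and torsion elements of a pro-$p$ group have $p$-power order), and extracts $j$, elements $x,y$, and open subgroups $H_{1},H_{2}$ of $C$ with $xH_{1}\times yH_{2}\subseteq X_{j}$. Setting $H=H_{1}\cap H_{2}$, the law $[xh_{1},yh_{2}]^{p^{j}}\equiv 1$ holds on cosets of a subgroup $H$ that is open in $C$ itself, and only then is Proposition \ref{WilZel} applied (together with \eqref{LandCand}, exactly as in your final paragraph, which is fine). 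The moral is that since only the derived group is periodic, the periodicity must be absorbed into the \emph{word} --- one uses the two-variable word $[x_{1},x_{2}]^{p^{j}}$, whose values automatically land in $C'$ --- so that the cosets themselves can range over an open subgroup of the full centralizer. Your reduction scheme and the identification of $C_{L(G)}(a)$ with the Lie algebra of $C$ relative to the series $D_i(G)\cap C$ are correct; it is only the production of the coset identity that must be replaced by this pair argument.
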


\begin{proof}
By  Proposition \ref{WilZel} it is enough to show that $C_{G}(a)$ satisfies a coset identity. 
For each positive integer  $i$ set $$X_{i}=\{ (x,y) \mid x\in C_{G}(a), y\in C_{G}(a) \,\text{and}\, [x,y]^{p^{i}}=1 \}.$$
Each of the sets $X_{i}$ is closed in $C_{G}(a)\times C_{G}(a)$ and there are only countably many of them. We have $C_{G}(a)\times C_{G}(a)= \bigcup X_{i}$. It follows from Baire's Category Theorem (see \cite[page 200]{Kell}) that some set $X_{j}$ has a non-empty interior. In particular this set $X_{j}$ contains a set $xH_{1}\times yH_{2}$ where $H_{1}, H_{2}$ are open subgroups of $C_{G}(a)$.  Putting $H=H_{1}\cap H_{2}$  we see that the identity $[xh_{1},yh_{2}]^{p^{j}}=1$ is satisfied for all $h_{1},h_{2}\in H$  and so this is a coset identity in $C_{G}(a)$.   Applying Proposition \ref{WilZel} to $C_{G}(a)$ we conclude that $C_{L(G)}(a)$ satisfies a multilinear polynomial identity.  
\end{proof}

\section{Proof of Theorem \ref{MT}}

Now we are ready to prove Theorem \ref{MT} which we state again for the reader's convenience. 

\begin{theorem}\label{PR}
Let $q$ be a prime, $A$ an elementary abelian group of order $q^3$. Suppose that $A$ acts as a coprime group of automorphisms on a profinite group $G$. Assume that $C_G(a)'$ is periodic for each $a \in  A^{\#}$. Then $G'$ is locally finite.
\end{theorem}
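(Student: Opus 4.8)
The plan is to reduce local finiteness of $G'$ to the \emph{finiteness} of the fixed-point derived subgroups $C_G(a)'$ and then to feed this into the quantitative finite theorem together with Zelmanov's local finiteness theorem. First I would record that $G'$ involves only finitely many primes: the profinite form of Corollary \ref{CO} gives $\pi(G')=\bigcup_{a\in A^{\#}}\pi(C_G(a)')$, and since each $C_G(a)'$ is periodic, Herfort's theorem makes each $\pi(C_G(a)')$, hence $\pi(G')$, finite. Granting that every $C_G(a)'$ is in fact finite, of exponent dividing a common $m$, Theorem \ref{qqq} applied to each finite continuous quotient $G/N$ (a finite $q'$-group in which $C_{G/N}(a)'=C_G(a)'N/N$ has exponent dividing $m$) bounds $\exp((G/N)')$ in terms of $m$ and $q$; letting $N$ range over the open $A$-invariant normal subgroups bounds $\exp(G')$, and a profinite group of finite exponent is locally finite by Zelmanov's theorem. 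So the whole problem reduces to proving that each $C_G(a)'$ is finite.

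To see that $C_G(a)'$ is finite, since $\pi(C_G(a)')$ is finite it suffices to show that for every prime $p$ its Sylow pro-$p$ subgroup is finite. That Sylow subgroup is $C_G(a)'\cap P$ for an $A$-invariant Sylow pro-$p$ subgroup $P=S\cap G'$ of $G'$, where $S$ is an $A$-invariant Sylow pro-$p$ subgroup of $G$. Since a closed periodic subgroup of a $p$-adic analytic group is finite, it is enough to prove that $P$ is $p$-adic analytic. I would attack this through the Lie algebra $L=L(S)$ over $\F_p$, on which $A$ acts coprimely because $p\neq q$, by showing that the subalgebra of $L$ attached to $P$ is nilpotent and then applying Lazard's criterion.

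The Lie input has three layers. By the Baire category argument of Lemma \ref{periodicandL(G)PI}, the periodicity of $C_S(a)'\le C_G(a)'$ forces a coset identity on $C_S(a)$, so by \eqref{CLp} and Proposition \ref{WilZel} each $C_L(a)$ is PI; as $C_L(A)\le C_L(a)$ is then PI, Theorem \ref{PIandC} shows that $L$ is PI. Next, the homogeneous elements arising from the periodic groups $C_S(a)'$ are ad-nilpotent by Lazard's Lemma \ref{Laz}, and they all lie in the \emph{single} subalgebra $C_L(a)'=[C_L(a),C_L(a)]$, so every commutator of them is again ad-nilpotent and Theorem \ref{liealgbnilp} yields that each $C_L(a)'$ is nilpotent. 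Finally, using the profinite and Lie analogues of Lemma \ref{[N,G]} and Theorems \ref{T} and \ref{p1...pr}, the part of $L$ corresponding to $P$ is assembled from the subalgebras $C_L(a)'$, the rank-$3$ hypothesis being exactly what places each cross term $[C_L(A_i),C_L(A_j)]$ inside one centralizer $C_L(a)'$, since $A_i\cap A_j\neq1$.

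The main obstacle is to pass from the nilpotency of the individual subalgebras $C_L(a)'$ to the nilpotency of the whole subalgebra attached to $P$. Theorem \ref{liealgbnilp} requires \emph{every} commutator in a generating set to be ad-nilpotent, whereas a commutator mixing two different centralizers, such as $[x,y]$ with $x\in C_L(a)'$ and $y\in C_L(b)'$, need not be; rank $3$ controls pairwise intersections $A_i\cap A_j$ but not triple ones, so the difficulty is already visible for commutators of weight $\ge 3$. I expect to resolve it by combining the PI property of $L$ with the Engel-type relations $[L,\underbrace{C_L(a)',\ldots,C_L(a)'}_{v}]=0$ furnished by Lemma \ref{L(GH)} (applicable because $C_L(a)'\le L(S,C_S(a))$ and $C_S(a)$ satisfies a coset identity), which say that each $C_L(a)'$ acts nilpotently on all of $L$; fed into Lemma \ref{L14} and the decomposition above, these should force the subalgebra attached to $P$ to be nilpotent. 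Once that is in place, Lazard's Lemma \ref{PA} (after the routine reduction to finitely generated pro-$p$ groups) shows that $P$ is $p$-adic analytic, and the reduction of the first two paragraphs then completes the proof.
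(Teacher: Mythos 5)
Your reduction rests on proving a statement that is false under the theorem's hypotheses: that each $C_G(a)'$ is finite and, a fortiori, that an $A$-invariant Sylow pro-$p$ subgroup $P$ of $G'$ is $p$-adic analytic. Periodicity of a profinite group is very far from finiteness. Concretely, let $R$ be a finite nonabelian $q'$-group, let $D=(C_r)^{3}$ for a prime $r\equiv 1\pmod q$, on which $A$ acts faithfully and coordinatewise, and let $G=D\times \prod_{i=1}^{\infty}R$ with $A$ acting trivially on the infinite product. All hypotheses of Theorem \ref{MT} hold, but for every $a\in A^{\#}$ we have $C_G(a)'=\prod_{i=1}^{\infty}R'$, which is periodic (even of finite exponent) yet infinite; moreover every Sylow pro-$p$ subgroup of $G'$ is an infinite product, hence not finitely generated and not $p$-adic analytic, so Lemma \ref{PA} can never be applied to $P$ itself. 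For this reason the step you defer as ``the routine reduction to finitely generated pro-$p$ groups'' is not routine: it is the central structural move of the proof. The paper never controls $C_G(a)'$ or $P$ globally; it fixes a single element $x\in P$, writes $x=x_1\cdots x_r$ via the profinite version of Theorem \ref{p1...pr}, forms the finitely generated $A$-invariant subgroup $Y=\langle x_1^{A},\ldots,x_r^{A}\rangle$, and proves that $Y$ is finite. This yields only \emph{periodicity} of $P$, hence of $G'$, and Zelmanov's theorem \cite{Z2} then gives local finiteness; no appeal to Theorem \ref{qqq} or to exponent bounds is needed, nor could your version of that appeal get off the ground.

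The Lie-theoretic part has a second genuine gap. Theorem \ref{liealgbnilp} applies only to finitely generated algebras, so invoking it for $C_L(a)'$, where $L$ is the algebra of a whole (typically infinitely generated) Sylow subgroup, is illegitimate; the paper applies it to $\langle M_a\rangle$, generated by a finite-dimensional subspace of $L_p(Y)$. More importantly, the mixed-commutator difficulty you flag is precisely the crux, and your proposed remedy (``I expect to resolve it\ldots these should force\ldots'') is not an argument. Over $\F_p$, an element coming from $C_G(a)'$ is fixed only by the cyclic subgroup $\langle a\rangle$ of order $q$, and two such subgroups can intersect trivially, so the rank-$3$ hypothesis gives no control over $[x,y]$ with $x\in M_a$, $y\in M_b$; your cross terms $[C_L(A_i),C_L(A_j)]$ concern centralizers of \emph{maximal} subgroups of $A$, which is not where the relevant generators lie, and no Lie analogue of Theorem \ref{p1...pr} is established anywhere. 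The paper's mechanism --- entirely absent from your proposal --- is to extend scalars to $\F_p[\omega]$ and generate $\overline{L}$ by common eigenvectors of $A$: each eigenvector is centralized by a maximal subgroup of $A$, two maximal subgroups of the rank-three group $A$ meet nontrivially, so a commutator of two homogeneous eigenvectors lies in $[C_{\overline{L}}(a),C_{\overline{L}}(a)]$ for some $a\in A^{\#}$, and its $\F_p$-components generate a subalgebra to which Lemma \ref{L(GH)} applies. Finally, even granting nilpotency of $L_p(Y)$ and analyticity of $Y$, finiteness of $Y$ still requires the closing argument with the powerful subgroup $W$, its finite torsion subgroup $W_0$, the inclusion $[W,Y]\leq W_0$ from the profinite version of Lemma \ref{[N,G]}, and Schur's theorem, none of which appears in your plan.
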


\begin{proof}

Let $\pi=\pi(G')$ be the set of primes for which $G'$ has a non-trivial Sylow pro-$p$ subgroup. Note that  by the profinite version of Corollary \ref{CO} $\pi(G') = \bigcup_{a \in  A^{\#}} \pi(C_G(a)')$. According to the result of Herfort \cite{Herf} the set of primes dividing the orders of elements of a periodic profinite group is necessarily finite. Therefore each set $\pi(C_G(a)')$ is finite, since $C_G(a)'$ is periodic for all $a\in A^{\#}$. Hence $\pi$ is finite as well. Write $\pi=\left\{p_1,\ldots,p_n\right\}$.  

Choose $p\in \pi$. It follows from Lemma \ref{FG1profi}(2) that $G'$ posses an $A$-invariant Sylow pro-$p$ subgroup $P$. Let $A_{1},\ldots, A_{s}$ be the maximal subgroups of $A$ and let $P_{1},\ldots,P_{r}$ be all the subgroups of the form $P\cap[C_{G}(A_{i}),C_{G}(A_{j})]$ for suitable $i,j$. It is clear that each subgroup $P_k$ is contained in $C_{G}(a)'$ for a suitable nontrivial element $a\in A_i\cap A_j$. The nontriviality of the intersection $A_i\cap A_j$ follows from the fact that $A$ has order $q^3$ while both $A_i$ and $A_j$ have order $q^2$.
The profinite version of Theorem \ref{p1...pr} tells us that 
 \begin{equation}\label{producto} P=P_{1}P_{2}\cdots P_{r}.
 \end{equation} 
 Let $x$ be an element of $P$. In view of (\ref{producto}) we can write $x=x_{1}\cdots x_{r}$, where each $x_{i}$ belongs to some $C_{G}(a)'$ for $a\in A^{\#}$.\  Note that $r\leq q^{2}+q+1$.
 
Let $Y$ be the subgroup of $G$ generated by the orbits $x_{l}^{A}$ for $l=1,\ldots,r$. Each orbit contains at most $q^{2}$ elements so it follows that $Y$ has at most $q^{2}r$ generators.  
In order to prove that $G'$ is locally finite it is enough to show that $Y$ is finite. Indeed, once this is proved, we can say that every element $x$ in $P$ is of finite order, so that $P$ is periodic. Now let $y$ be an arbitrary element of $G'$ and let $\langle y \rangle$  be the procyclic subgroup generated by $y$. Write $\langle y\rangle=S_{1}\cdots S_{n}$, where $S_{i}$ denotes the Sylow $p_{i}$-subgroup of $\langle y\rangle$. Note that each $S_{i}$ is contained in $G'$ and from the argument above we know that every Sylow $p_{i}$-subgroup of $G'$ is periodic. So also each $S_{i}$ is periodic and it follows that  $S_{i}$ is finite, since it is actually a cyclic group. Note also that there are only finitely many of them since $\pi$ is finite. Thus, it follows that also $\langle y\rangle$ is finite and so $y$ has finite order. This holds for every element of $G'$, so $G'$ is periodic. Now Zelmanov's theorem \cite{Z2} tells us that $G'$ is locally finite, as desired.   
 
Therefore  it remains to prove that $Y=\langle x_{1}^{A},\ldots, x_{r}^{A}\rangle$ is finite. The Lie-theoretic techniques that we have described in Section 3 will now play a fundamental role. Let $L=L_{p}(Y)$ and $M=Y/\Phi(Y)$. Then $M$ is a subspace of $L$ such that $L=\langle M\rangle$. Since $Y$ is generated by at most $q^{2}r$ elements, it follows that the dimension of $M$ is at most $q^{2}r$. 

For any $a\in A^{\#}$ consider the subgroup $T_{a}=C_{G}(a)'\cap Y$ and denote by $M_{a}$ the image of $T_{a}$ in $M$. Since every element $x_i$ belongs to some $T_{a}$, it follows that $M=\sum_{a\in A^{\#}}M_a$. Note that $T_{a}$ is a pro-$p$ group that satisfies a coset identity, since $C_{G}(a)'$ is periodic. Furthermore $T_{a}$ has an $N_{p}$-series formed by $D_{i}(Y)\cap T_{a}$, so we can consider the corresponding Lie algebra which is isomorphic to  $L_{p}(Y,T_{a})$.   Note that  the algebra $\langle M_{a} \rangle$ generated by $M_{a}$ is a subalgebra  of $L_{p}(Y,T_{a})$. It follows from Proposition  \ref{WilZel} that  $\langle M_{a}\rangle$ is PI.
 
Every element of $M_{a}$ corresponds to an element of $C_{G}(a)'$ which is of finite order by the hypothesis. Moreover, for a fixed $a\in A^{\#}$, any  Lie commutator in elements of $M_{a}$ corresponds to a group commutator in elements of $C_{G}(a)'$. In view of Lemma \ref{Laz} we conclude that any Lie commutator in elements of $M_{a}$ is ad-nilpotent. We deduce from Theorem \ref{liealgbnilp} that $\langle M_{a}\rangle$ is nilpotent.  Thus, by Lemma \ref{L14}, there exists a number $v$ such that 
\begin{equation}
 \label{Maequ}
 [L, \underset{v}{\underbrace{M_{a},\ldots, M_{a}}}]=0. 
 \end{equation}   
 
Now we will extend the ground field $\F_{p}$ by a primitive $q$th root of unity $\omega$ and let  $\overline{L} = L \otimes \F_{p}[\omega]$. We view $\overline{L}$ as a Lie algebra over $\F_{p}[\omega]$ and so it is natural to identify $L$ with the $\F_{p}$-subalgebra $L\otimes 1$ of $\overline{L}$. We note that if an element $x \in L$ is ad-nilpotent,  then the ``same" element $x \otimes 1$ is also ad-nilpotent in $\overline{L}$. We will say that an element  of $\overline{L}$ is homogeneous if it belongs to $\overline{S}=S\otimes \F_{p}[\omega]$ for some homogeneous subspace $S$ of $L$.

The group $A$ acts naturally on $L$, and this action extends uniquely to $\overline{L}$. It is easy to see that $C_{\overline{L}}(a)=\overline{C_{L}(a)}$. Recall that, by (\ref{CLp}), $C_{L}(a)=L_{p}(Y,C_{Y}(a))$. The field $\F_{p}[\omega]$ contains all eigenvalues for any $a\in A$ regarded as a linear transformation of $\overline{L}$. It follows that any $A$-invariant subspace of $\overline{L}$ can be decomposed as a direct sum of $1$-dimensional $A$-invariant subspaces. Certainly the subspaces $\overline{M_{a}}$ are $A$-invariant. Using the fact that the algebra $\overline{L}$ is generated by $\overline{M}=\sum_{a\in A^{\#}}\overline{M_a}$ and  that the $\F_{p}[\omega]$-dimension of $\overline{M}$ is at most $q^{2}r$, we can choose vectors $v_{1},\ldots,v_{m}\in\cup_{a\in A^{\#}}\overline{M_a}$ with $m\leq q^{2}r$ such that each one of them is a common eigenvector for all $a\in A$ and $\overline{M}$ is spanned by these vectors.    It follows from (\ref{Maequ}) that $$[\overline{L}, \underset{v}{\underbrace{\overline{M_{a}},\ldots, \overline{M_{a}}}}]=0,$$ and, since every common eigenvector $v_{i}$ lies in some $\overline{M_{a}}$, we conclude that 

\begin{multline}
\label{eigenvectorsadnilp}
\text{ each of the vectors}\,   v_{1},\ldots,v_{m}\, \text{ is ad-nilpotent}.
\end{multline}

Now we wish to show that 
\begin{multline}
\label{commutatorhomog}
\text{if}\, l_{1},l_{2}\in \overline{L}\, \text{are common eigenvectors for all}\, a\in A\, \text{and} , \\
\shoveleft{\text{if they are
homogeneous, then}\, [l_{1},l_{2}]\, \text{is ad-nilpotent}. }
\end{multline}

Since $l_{1},l_{2}$ are common eigenvectors for  all $ a\in A$, it follows that there exist two maximal subgroups $A_{1}$ and $A_{2}$ of $A$ such that $l_{1}\in C_{\overline{L}}(A_{1})$ and $l_{2}\in C_{\overline{L}}(A_{2})$. Let $a$ be a nontrivial element in $A_{1}\cap A_{2}$. Since $C_{G}(a)'$ is periodic and $C_{L}(a)=L_{p}(Y,C_{Y}(a))$, it follows from Lemma \ref{Laz} that any homogeneous element of $[C_{L}(a),C_{L}(a)]$ is ad-nilpotent. Observe that the commutator $[l_{1},l_{2}]$ is a homogeneous element of $\overline{L}$ and so it can be written as 
$$
[l_{1},l_{2}]=y_{0}\,\otimes\,1+y_{1}\,\otimes\, \omega+\cdots +y_{q-2}\,\otimes\, \omega^{q-2},
$$ 
 for suitable homogeneous elements $y_{0},\ldots,y_{q-2}$ of $[C_{L}(a),C_{L}(a)]$. Note that the elements $y_{0},\ldots,y_{q-2}$ correspond to some elements $g_{0},\ldots,g_{q-2}$ that belong to  $C_{Y}(a)'$. Put $K=\langle y_{0},\ldots,y_{q-2}\rangle$ and $H=\langle g_{0},\ldots,g_{q-2}\rangle$. Since $H$ is a subgroup of $C_{Y}(a)$, it follows that $H$ satisfies a coset identity. Combining now the fact that $K\leq L(Y,H)$ with Lemma \ref{L(GH)} we conclude that there exists an integer $u$ such that 
$$[L,\underset{u}{\underbrace{K,\ldots,K}}]=0.$$ 
Clearly, this implies that 
$$[\overline{L},\underset{u}{\underbrace{\overline{K},\ldots,\overline{K}}}]=0.$$ 
Since the commutator $[l_{1},l_{2}]$ belongs to $\overline{K}$, we have (\ref{commutatorhomog}), as desired. 

Since $C_{Y}(a)'$ is periodic, $C_{Y}(a)$ satisfies a coset identity.  Hence, by using Lemma \ref{periodicandL(G)PI}, we obtain that $C_{L}(a)$  satisfies a certain multilinear polynomial identity.  The identity also holds in $C_{\overline{L}}(a)=\overline{C_{L}(a)}$.  Therefore Theorem \ref{PIandC} implies that $\overline{L}$  satisfies a polynomial identity. 
Combining this fact with (\ref{eigenvectorsadnilp}) and (\ref{commutatorhomog}) we apply Theorem  \ref{liealgbnilp} and deduce that $\overline{L}$ is nilpotent. Hence, $L$ is nilpotent as well.                                               

Since $Y$ is a finitely generated pro-$p$ group and  $L=L_{p}(Y)$ is nilpotent, it follows from  Lemma \ref{PA} that $Y$ is $p$-adic analytic. So $Y$ contains an open characteristic powerful subgroup $W$ (see  Chapter 3 in \cite{GA}). The subgroup $W$ is again finitely generated. Let $W_{0}$ be the set of elements of finite order in $W$. It follows that $W_{0}$ is a finite group \cite[Theorem II.4.20]{GA}. By the profinite version of Lemma \ref{[N,G]} we have 
$[W,Y]=\langle[C_{W}(a),C_{Y}(a)]\, \mid a\in A^{\#}\rangle$. Thus, it follows that $[W,Y]$ is generated by elements of finite order and so $[W,Y]\leqslant W_{0}$. This implies that the quotient group $Y/W_{0}$ is central-by-finite since $W$ is central in $Y$ modulo $W_{0}$ and $W $ has finite index in $Y$. By Schur's theorem \cite[p. 102]{rob} the image of $Y'$ in $Y/W_{0}$ is finite and we conclude that $Y'$ is also finite. Note that $Y/Y'$ is abelian and generated  by finitely many elements of finite order, since every $x_{i}^{A}$ is contained in $C_{G}(a)'$ for a suitable $a\in A^{\#}$. It follows that the quotient $Y/Y'$ is finite and so is $Y$. This concludes the proof of the theorem.   
\end{proof}


\end{document}